\documentclass[12pt, reqno]{amsart}
\usepackage{amsmath, amsthm, amscd, amsfonts, amssymb, graphicx, color,tikz}
\usepackage[bookmarksnumbered, colorlinks, plainpages]{hyperref}
\usepackage{mathrsfs}
\usepackage{enumerate}
\usepackage[utf8]{inputenc}
\usepackage{bbold}

\usepackage{tikz}

\usepackage[all]{xy}  

\newtheorem{theorem}{Theorem}[section]
\newtheorem{lemma}[theorem]{Lemma}
\newtheorem{proposition}[theorem]{Proposition}

\theoremstyle{definition}

\newtheorem{remark}[theorem]{Remark}

\numberwithin{equation}{section}

\newcommand{\veps}{\varepsilon}
\newcommand{\CC}{\mathbb C}

\newcommand{\DD}{\mathbb D}

\newcommand{\ZZ}{\mathbb Z}
\newcommand{\NN}{\mathbb N}
\newcommand{\NNZ}{\mathbb Z_+}
\newcommand{\NNZdhat}{\mathbb Z_+^{\hat d}}

\newcommand{\alg}{\mathrm{alg\,}}

\newcommand{\fcd}{\mathcal{F}({\mathbb{C}}^d)}
\newcommand{\vect}{\mathrm{span\,}}

\newcommand{\bd}{\mathbb B_d}
\newcommand{\hdbd}{H^2(\bd)}

\allowdisplaybreaks[4]

\hypersetup{colorlinks=true,linkcolor=blue,citecolor=green}

\begin{document}
\setcounter{page}{1}

\title[Commutants of composition operators]
{Commutants of composition operators on function spaces of several complex variables}
\date{\today}

\author[F. Bayart, M. Wang, X. Yao]{Fr\'ed\'eric Bayart, Maofa Wang and Xingxing Yao}

\address{Laboratoire de Math\'ematiques Blaise Pascal UMR 6620 CNRS, Universit\'e Clermont Auvergne, Campus universitaire des C\'ezeaux, 3 place Vasarely, 63178 Aubi\`ere Cedex, France.}
\email{frederic.bayart@uca.fr}

\address{School of Mathematics and Statistics, Wuhan University, Wuhan 430072, China.}
\email{mfwang.math@whu.edu.cn}

\address{School of Mathematics and Physics, Wuhan Institute of Technology, Wuhan 430205, China.}
\email{xxyao.math@wit.edu.cn}


\subjclass[2010]{Primary 47B33, Secondary 46E15.}

\keywords{Composition operator, Fock space, Hardy space, commutant.}

\begin{abstract}
This paper is devoted to an in-depth study of the minimal commutant property for composition operators acting on Hilbert spaces of holomorphic functions in several complex variables, such as the Fock space on  $\CC^d$, the Hardy space on the Euclidean ball, and the Hardy space on the unit polydisc.
\end{abstract}
\maketitle

\tableofcontents

\section{Introduction}

Let $\mathcal{B}(H)$ denote the algebra of all bounded linear operators on a Hilbert space $H$.
Given any $A\in \mathcal{B}(H)$, recall that the commutant of $A$ is defined as the set of all operators that commute with $A$,
that is,
\begin{equation*}
  \{A\}':=\{T\in\mathcal{B}(H):TA=AT\}.
\end{equation*}
A way to investigate the commutant of $A$ is to determine if it is minimal or not. Recall that
$\{A\}'$ is a closed subalgebra of $\mathcal{B}(H)$ in the weak operator topology $\sigma$.
Let $\alg (A)$ denote the unital algebra generated by the operator $A$, that is,
\begin{equation*}
  \alg (A):=\{p(A):p\ \mbox{is a polynomial}\}.
\end{equation*}
It is easy to see that $\overline{\alg (A)}^{\sigma}$ is a commutative algebra such that $\overline{\alg (A)}^{\sigma}\subset \{A\}'$.
Then we say that an operator $A$ has a \emph{minimal commutant} if
\begin{equation*}
  \overline{\alg (A)}^{\sigma}=\{A\}'.
\end{equation*}

This property has already been investigated in several different contexts: for instance if $H$ is finite-dimensional (this amounts to saying that $A$ is nonderogatory, i.e. its eigenvalues have geometric multiplicity equal to $1$), if $A$ is a forward weighted shift on $\ell^2$ or the discrete Cesàro operator (see \cite{SW}), if $A$ is the Volterra operator (see \cite{Sa67}) or if $A$ is an analytic Toeplitz operator (see \cite{GL25}).

In this paper, we are interested in the minimal commutant property for composition operators. Recall that if $H$ is a Hilbert space of holomorphic functions defined on $\mathcal U,$ a domain of $\CC^d,$ and if $\varphi$ is a holomorphic self-map of $\mathcal U,$ the composition operator $C_\varphi$ is defined by $C_\varphi(f)=f\circ\varphi.$
We refer to the monographs written by Cowen-MacCluer \cite{cm} and
Shapiro \cite{sh} for general study of composition operators on classical spaces of analytic functions, or
to the more recent \cite{LQ26}.

After initial attempts in \cite{cl98} and in \cite{Wo02}, the minimal commutant property for composition operators on $H^2(\DD)$ was studied in depth by M. Lacruz, F. Le\'{o}n-Saavedra, S. Petrovic and L. Rodr\'{\i}guez-Piazza in \cite{LLSR18}.
They proved a complete characterization of the symbols $\varphi$ such that $C_\varphi$ has the minimal commutant property provided $\varphi$ is a linear fractional map.
More precisely, if $\varphi$ is a linear fractional self-map of the unit disk $\mathbb{D}$, then on $H^{2}(\mathbb{D})$, $C_{\varphi}$ has a minimal commutant
if and only if $\varphi$ is a non-periodic elliptic automorphism,
or a parabolic non-automorphism, or a loxodromic map.

The methods of \cite{LLSR18} were late extended in \cite{BY} in the context of Hardy spaces of Dirichlet series,
with applications to the classical Hardy space $H^2(\DD)$.

\smallskip

The aim of this paper is to investigate the minimal commutant property for composition operators acting on spaces of holomorphic functions of several complex variables. The key point in  \cite{LLSR18} and \cite{BY} was the existence of a Koenigs function, i.e. a holomorphic map $\sigma$ such that $\sigma\circ\varphi=\lambda\sigma.$ Working in several complex variables, we must replace the complex number $\lambda$ by the matrix $A=\varphi'(0)$ and the existence of such a
$\sigma$ is not always guaranteed. This leads us to introduce a general framework: we say that $(H,\varphi)$ is a strong Koenigs system if there exists a Koenigs map $\sigma$ (i.e. satisfying $\sigma\circ\varphi=A\circ\sigma$) and if some natural assumptions
are satisfied. For a strong Koenigs system, we characterize whether $C_\varphi$ admits the minimal commutant property (see Theorem \ref{thm:sufficiency}). This characterization involves the structure of the eigenvalues of $A,$ in particular the size of its Jordan blocks, a feature absent from the one-variable theory.
In particular, we characterize completely the minimal commutant property for compact composition operators acting on the Fock space in Section \ref{sec:fock} (see Theorem \ref{thm:fock}), and provide interesting examples for the Hardy spaces of the Euclidean unit ball or of the unit polydisc in Section \ref{sec:ball-polydisc} (see Theorems \ref{thm:ball} and \ref{thm:polydisc}).

\smallskip

{\bf Notation.} Throughout this work, $C$ and $M$ will denote positive constants (which could depend on the ambient Hilbert space of holomorphic functions, but not on the functions or the multi-indices that will be involved). The values of $C$ and $M$ may change from line to line. For a map $f:E\to\CC^d$ and $\alpha\in\ZZ_+^d,$ $f^\alpha:E\to\mathbb C$ is defined by $f^\alpha=f_1^{\alpha_1}\cdots f_d^{\alpha_d}.$

\section{Preliminaries}\label{Pre}

\subsection{Some auxiliary lemmas}

In this subsection, we gather some results on operators with a minimal commutant.

\begin{lemma}$^{\mbox{\tiny \cite[Lemma 1.3]{LLSR18}}}$\label{sum-com}
Consider a direct sum $H=H_{1}\oplus H_{2}$, let $A_{j}\in \mathcal{B}(H_{j})$ for $j=1,2$, and let $A=A_{1}\oplus A_{2}$.
If $A$ has a minimal commutant, then so do both $A_{1}$ and $A_{2}$.
\end{lemma}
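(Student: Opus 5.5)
Let $T_1\in\{A_1\}'$; by symmetry it suffices to show that $T_1\in\overline{\alg(A_1)}^{\sigma}$. The idea is to lift $T_1$ to an operator on $H$ that commutes with $A$, use the minimality of $\{A\}'$, and then compress back to $H_1$.

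\emph{Lifting.} Set $T=T_1\oplus 0$ on $H=H_1\oplus H_2$. Since $A=A_1\oplus A_2$ is block diagonal,
\[
TA=T_1A_1\oplus 0=A_1T_1\oplus 0=AT,
\]
so $T\in\{A\}'$. By hypothesis $T\in\overline{\alg(A)}^{\sigma}$, so there is a net of polynomials $(p_i)$ with $p_i(A)\to T$ in the weak operator topology.

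\emph{Compression.} Because $A$ is block diagonal, $p(A)=p(A_1)\oplus p(A_2)$ for every polynomial $p$. Let $P_1$ be the orthogonal projection of $H$ onto $H_1$, regarded as $H_1\oplus 0$. For $x,y\in H_1$,
\[
\langle p_i(A_1)x,y\rangle=\langle p_i(A)(x\oplus 0),\,y\oplus 0\rangle\longrightarrow\langle T(x\oplus 0),\,y\oplus 0\rangle=\langle T_1x,y\rangle .
\]
Hence $p_i(A_1)\to T_1$ weakly on $H_1$, and so $T_1\in\overline{\alg(A_1)}^{\sigma}$. This gives $\{A_1\}'\subset\overline{\alg(A_1)}^{\sigma}$. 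The reverse inclusion always holds, so $A_1$ has a minimal commutant. The same argument, with $T=0\oplus T_2$, handles $A_2$.

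\emph{Where care is needed.} The only points to check are that $T_1\oplus 0$ genuinely commutes with $A$, which uses the block-diagonal structure, and that WOT convergence passes to compressions, which is immediate from testing against vectors in $H_1$. Neither step is a real obstacle. One could equally note that the map $S\mapsto P_1S|_{H_1}$ is WOT-continuous and multiplicative on block-diagonal operators; that is the same argument in another form.
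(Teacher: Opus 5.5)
Your lift-and-compress argument is correct and complete: $T_1\oplus 0$ commutes with $A$, minimality of $\{A\}'$ gives a net with $p_i(A)=p_i(A_1)\oplus p_i(A_2)\to T_1\oplus 0$ in the weak operator topology, and testing against vectors of $H_1$ yields $p_i(A_1)\to T_1$. The paper does not prove this lemma but cites \cite[Lemma 1.3]{LLSR18}, and your argument is the standard one for it; note also that your computation never uses orthogonality of $H_1$ and $H_2$, only that $p_i(A)$ maps $H_1$ into itself and that $T_1\oplus 0$ is bounded, so it also covers the non-orthogonal decomposition $H=\mathcal P_N\oplus\mathcal Q_N$ to which the paper applies the lemma in the proof of Theorem~\ref{nece}.
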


\begin{lemma}$^{\mbox{\tiny \cite[Lemma 1.4]{LLSR18}}}$\label{sim-com}
If $A\in \mathcal{B}(H)$ has a minimal commutant, $S$ is an invertible operator and $B=S^{-1}AS$, then $B$ also has a minimal commutant.
\end{lemma}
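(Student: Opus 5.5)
The plan is to move everything across the similarity and use that conjugation by $S$ is an algebra isomorphism of $\mathcal B(H)$ which is a homeomorphism for the weak operator topology. Define $\Phi:\mathcal B(H)\to\mathcal B(H)$ by $\Phi(T)=S^{-1}TS$. Then $\Phi$ is linear, multiplicative, unital and bijective, with inverse $T\mapsto STS^{-1}$. The proof then consists of three checks, carried out in order.

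\emph{Step 1: commutants.} We show $\Phi(\{A\}')=\{B\}'$. If $TA=AT$, then
\[
S^{-1}TS\,B=S^{-1}TAS=S^{-1}ATS=B\,S^{-1}TS .
\]
The reverse inclusion follows by the same computation applied to $\Phi^{-1}$.

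\emph{Step 2: polynomial algebras.} We show $\Phi(\alg(A))=\alg(B)$. For every polynomial $p$ we have $\Phi(p(A))=p(S^{-1}AS)=p(B)$, since $\Phi$ is a unital algebra homomorphism.

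\emph{Step 3: weak operator topology.} We show $\Phi$ is a $\sigma$-homeomorphism. If $T_i\to T$ weakly, then for all $x,y\in H$,
\[
\langle S^{-1}T_iSx,y\rangle=\langle T_i(Sx),(S^{-1})^*y\rangle\to\langle S^{-1}TSx,y\rangle ,
\]
so $\Phi$ is weak-operator continuous. The same argument applies to $\Phi^{-1}$. Consequently
\[
\Phi\bigl(\overline{\alg(A)}^{\sigma}\bigr)=\overline{\Phi(\alg(A))}^{\sigma}=\overline{\alg(B)}^{\sigma}.
\]

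\emph{Conclusion.} Applying $\Phi$ to the hypothesis $\overline{\alg(A)}^{\sigma}=\{A\}'$ and using Steps 1--3 gives $\overline{\alg(B)}^{\sigma}=\{B\}'$, which is the statement.

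There is no real obstacle. The only point needing care is Step 3, where one must verify that conjugation commutes with taking weak-operator closures; this holds because $\Phi$ is a homeomorphism, not merely continuous.
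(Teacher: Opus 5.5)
Your proof is correct. The map $T\mapsto S^{-1}TS$ is a unital algebra automorphism of $\mathcal B(H)$ and a homeomorphism for the weak operator topology, so it carries $\{A\}'$ onto $\{B\}'$ and $\overline{\alg(A)}^{\sigma}$ onto $\overline{\alg(B)}^{\sigma}$; you rightly note that continuity of both $\Phi$ and $\Phi^{-1}$ is what makes the closures correspond. The paper gives no proof of its own here (it simply cites \cite[Lemma 1.4]{LLSR18}), and your argument is the standard one for this fact.
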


If the space $H$ is finite-dimensional, the operators $T\in\mathcal B(H)$ with the minimal commutant property are easily characterized. Observe that in that case, the minimal commutant property reduces to the equality $\textrm{alg}(A)=\{A\}'$. Recall that an operator is nonderogatory if all its eigenvalues have geometric multiplicity $1.$

\begin{lemma}$^{\mbox{\tiny \cite[Corollary 4.4.18]{HJ91}}}$\label{lem:finitedimension} Assume that $\dim(H)<\infty$ and let $T\in\mathcal B(H).$ Then $T$ has a minimal commutant if and only if $T$ is nonderogatory.
\end{lemma}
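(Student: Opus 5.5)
The plan is to treat the two implications separately, using the Jordan decomposition. In finite dimension every subalgebra of $\mathcal B(H)$ is weakly closed, so the minimal commutant property is simply the equality $\alg(T)=\{T\}'$. Since $\dim\alg(T)$ is the degree of the minimal polynomial $m_T$ of $T$, everything reduces to comparing $\deg m_T$ with $\dim\{T\}'$. By Lemma \ref{sim-com}, and because nonderogatoriness is invariant under similarity, we may assume $T$ is in Jordan form, $T=\bigoplus_{\lambda}T_\lambda$, where $T_\lambda$ acts on the generalized eigenspace $H_\lambda$.

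\emph{Necessity.} Suppose that some eigenvalue $\lambda$ has geometric multiplicity at least $2$. Then $T_\lambda$ splits further as $J_1\oplus J_2\oplus R$, where $J_1,J_2$ are Jordan blocks for the same $\lambda$, of sizes $k_1\ge k_2$. By Lemma \ref{sum-com} it is enough to show that $J_1\oplus J_2$ does not have a minimal commutant. Let $P$ be the projection onto the first summand. It commutes with $J_1\oplus J_2$. If $P=p(J_1\oplus J_2)$ for a polynomial $p$, then $p(J_1)=I$ and $p(J_2)=0$. Comparing the constant terms of $p$ at $\lambda$ gives $p(\lambda)=1$ and $p(\lambda)=0$ simultaneously, which is a contradiction. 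Hence $\{T\}'\neq\alg(T)$.

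\emph{Sufficiency.} Suppose $T$ is nonderogatory. Then each $T_\lambda$ is a single Jordan block $\lambda I+N$ of size $k_\lambda$, and $m_T=\prod(z-\lambda)^{k_\lambda}$ has degree $\dim H$. Take $S\in\{T\}'$. Then $S$ commutes with every polynomial in $T$, including the Riesz projections onto the $H_\lambda$. So $S$ preserves each $H_\lambda$ and $S=\bigoplus S_\lambda$.

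On each block, $S_\lambda$ commutes with the nilpotent shift $N$. A direct computation, or the cyclic-vector argument below, shows that $S_\lambda$ is upper triangular Toeplitz, that is, $S_\lambda=q_\lambda(N)$ for some polynomial $q_\lambda$ of degree less than $k_\lambda$. The argument is as follows: $e_{k}$ is cyclic for $N$, so $S_\lambda$ is determined by $S_\lambda e_k=q_\lambda(N)e_k$.

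Next, by the Chinese remainder theorem applied to the pairwise coprime polynomials $(z-\lambda)^{k_\lambda}$, there is a single polynomial $p$ with $p(z)\equiv q_\lambda(z-\lambda)\pmod{(z-\lambda)^{k_\lambda}}$ for every $\lambda$. Then $p(T)=S$. Thus $\{T\}'=\alg(T)$.

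The only step requiring a little care is identifying the commutant of a single Jordan block via the cyclic vector. The rest is bookkeeping, and the whole statement is the classical \cite[Corollary 4.4.18]{HJ91}.
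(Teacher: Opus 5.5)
Your argument is correct and complete. The paper gives no proof of its own here and simply defers to Horn--Johnson, where, as I recall, the result is deduced from a dimension count. If $T$ has Jordan blocks of sizes $n_i$, then $\dim\{T\}'=\sum\min(n_i,n_j)$, the sum running over ordered pairs of blocks with a common eigenvalue. Hence $\dim\{T\}'\ge\dim H$, with equality exactly when $T$ is nonderogatory. Meanwhile $\dim\alg(T)=\deg m_T\le\dim H$, with the same equality case. Since $\alg(T)\subset\{T\}'$, the two coincide if and only if $T$ is nonderogatory.

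Your first paragraph sets up precisely this comparison, but you then never use it and argue directly instead. That buys you two things. First, you only need the commutant of a single Jordan block, rather than the full dimension formula for solutions of $TX=XT$. Second, both directions become constructive: you exhibit an explicit commuting idempotent that is not a polynomial in $T$, and an explicit polynomial obtained by Hermite interpolation via the Chinese remainder theorem. The dimension count is shorter once that formula is available, and it also computes the codimension of $\alg(T)$ in $\{T\}'$.

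Your sufficiency argument is also the natural one in the context of this paper. It is the finite-dimensional prototype of Proposition \ref{prop:approxim}. There, an operator in $\{C_\varphi\}'$ restricted to $\mathcal P_\beta$ is upper triangular Toeplitz in the basis $\mathcal B_\beta$, and $P$ is chosen so that $P^{(k)}(\lambda^\beta)/k!$ is close to $w^{|\beta|}c_{\beta,k}$. Lemma \ref{inter-lem} plays the role of an approximate Chinese remainder theorem on the finitely many blocks with $|\lambda^\beta|\ge r$.
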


\begin{lemma}\label{lem:powercommutant}
Let $T\in\mathcal{B}(H)$ and $k\in\NN$. If $T^k$ has a minimal commutant, then $T$ has a minimal commutant.
\end{lemma}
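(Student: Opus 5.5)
The argument is a chain of inclusions. Suppose $T^k$ has a minimal commutant, that is, $\{T^k\}'=\overline{\alg (T^k)}^{\sigma}$. We have to show $\{T\}'\subset\overline{\alg (T)}^{\sigma}$; the reverse inclusion always holds, as recalled in the introduction.

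First, any $S\in\{T\}'$ commutes with $T^k$, since $ST^k=TST^{k-1}=\dots=T^kS$. Hence
\[
\{T\}'\subset\{T^k\}'=\overline{\alg (T^k)}^{\sigma}.
\]
Second, every polynomial in $T^k$ is a polynomial in $T$: if $p$ is a polynomial, then $p(T^k)=q(T)$ with $q(z)=p(z^k)$. So $\alg (T^k)\subset\alg (T)$. Taking weak operator closures preserves inclusions, which gives
\[
\overline{\alg (T^k)}^{\sigma}\subset\overline{\alg (T)}^{\sigma}.
\]
Combining the two steps yields $\{T\}'\subset\overline{\alg (T)}^{\sigma}$. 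Since $\overline{\alg (T)}^{\sigma}\subset\{T\}'$ always holds, we get equality, so $T$ has a minimal commutant.

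There is no real obstacle here. The only point to watch is the containment $\{T\}'\subset\{T^k\}'$. The commutant of $T^k$ may be much larger than that of $T$, but this does no harm: the hypothesis already places all of $\{T^k\}'$ inside $\overline{\alg (T^k)}^{\sigma}$, which lies inside $\overline{\alg (T)}^{\sigma}$.
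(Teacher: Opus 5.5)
Your proof is correct and follows the paper's argument exactly. The paper also combines $\{T\}'\subset\{T^k\}'$ with $\alg (T^k)\subset\alg (T)$ to get the chain of inclusions; you simply spell out the details it calls "clear".
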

\begin{proof}
Indeed, it is clear that $\{T\}'\subset \{T^k\}'$ and conversely that $\textrm{alg}(T^k)\subset \textrm{alg}(T).$
\end{proof}

\subsection{Polynomial interpolation}

As an application of Mergelyan's theorem, we prove the following interpolation result,
which will play a critical role in constructing our operator approximation.

\begin{lemma}\label{inter-lem}
Let $(z_{j})_{j\geq 1}\subset\DD$ be  such that $(|z_{j}|)$ is decreasing, let $(n_{j})_{j\geq 1}\subset\mathbb{N}$ and let $(a_{j,k})_{j\geq1,\ 0\leq k\leq n_{j}-1}\subset\mathbb{C}$.
Let also $\varepsilon>0$ and $M\in\NN$. For any $r\in(0,1)$ and any $N\in\NN$ such that $|z_j|>r$ if $1\leq j\leq N$ and $|z_j|\leq r$ if $j\geq N+1,$ there exists a holomorphic polynomial $P$ such that:
$$\left\{\begin{array}{rcll}
|P^{(k)}(z_j)-a_{j,k}|&\leq& \veps  & \textrm{for all } 1\leq j\leq N, 0\leq k \leq n_j-1. \\ [0.3cm]
|P(z)| &\leq& \varepsilon |z|^M & \textrm{for } |z|\leq r.
\end{array}\right.$$
\end{lemma}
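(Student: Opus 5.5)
The plan is to reduce the statement to a single uniform approximation problem on a compact set with connected complement, solved by Mergelyan's theorem (Runge's theorem would also suffice, since the function to approximate will be holomorphic on a neighbourhood of the compact set). Derivative control at the nodes then follows from Cauchy estimates. I assume, as is implicit in the statement, that $z_1,\dots,z_N$ are pairwise distinct. Since $|z_j|>r\geq 0$ for $j\leq N$, all these points are nonzero.

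\emph{Step 1: an exact interpolant.} By Hermite interpolation, choose a polynomial $H$ with $H^{(k)}(z_j)=a_{j,k}$ for all $1\leq j\leq N$ and $0\leq k\leq n_j-1$. This polynomial does the job at the nodes, but it gives no control on the disc $\{|z|\leq r\}$.

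\emph{Step 2: the compact set and the target function.} Pick $\rho>0$ so small that the following hold:
\begin{itemize}
\item the closed discs $\overline{D}(z_j,2\rho)$, $1\leq j\leq N$, are pairwise disjoint;
\item these discs are disjoint from $\overline{D}(0,r+2\rho)$;
\item these discs avoid $0$.
\end{itemize}
Set
\[
K=\overline{D}(0,r)\cup\bigcup_{j=1}^N \overline{D}(z_j,\rho),
\]
a finite disjoint union of closed discs, so $\CC\setminus K$ is connected. On the open neighbourhood
\[
U=D(0,r+\rho)\cup\bigcup_{j=1}^N D(z_j,2\rho)
\]
of $K$, define $g=0$ on $D(0,r+\rho)$ and $g(z)=H(z)/z^M$ on each $D(z_j,2\rho)$. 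Then $g$ is holomorphic on $U$, because $0\notin D(z_j,2\rho)$.

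\emph{Step 3: approximation and division by $z^M$.} Given $\delta>0$, Mergelyan's theorem yields a polynomial $Q$ with $|Q-g|\leq\delta$ on $K$. Set $P(z)=z^MQ(z)$.
\begin{itemize}
\item For $|z|\leq r$, we have $g(z)=0$, so $|P(z)|=|z|^M|Q(z)|\leq \delta|z|^M$. This is exactly the second requirement once $\delta\leq\veps$. The factor $z^M$ is what forces the vanishing of order $M$ at the origin, which plain approximation of $0$ would not give.
\item On $\overline{D}(z_j,\rho)$, $|P-H|=|z|^M|Q-g|\leq (R+\rho)^M\delta$, where $R=\max_j|z_j|<1$. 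Cauchy's estimates on this disc give
\[
|P^{(k)}(z_j)-a_{j,k}|=|(P-H)^{(k)}(z_j)|\leq k!\,\rho^{-k}(R+\rho)^M\delta .
\]
Since only finitely many pairs $(j,k)$ occur (with $k\leq \max_{j\leq N} n_j-1$), choosing $\delta$ small enough makes all these errors at most $\veps$.
\end{itemize}

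The only point requiring care is the geometric set-up in Step 2: the discs must be separated from each other and from the $r$-disc so that $g$ is well defined and holomorphic and $K$ has connected complement. This is guaranteed by the strict inequality $|z_j|>r$ together with distinctness of the nodes. If repeated nodes were allowed, one would first merge the data at coinciding points, which requires compatible prescriptions there.
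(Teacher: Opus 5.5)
Your proof is correct and follows essentially the paper's route. Both arguments take a compact set made of $r\overline{\DD}$ and small pairwise disjoint discs around $z_1,\dots,z_N$, with a target function equal to $0$ on $r\overline{\DD}$ and to an interpolating polynomial divided by $z^M$ on the small discs; they then apply Mergelyan's theorem, set $P=z^MQ$, and conclude with Cauchy's estimates. The only cosmetic difference is that you build a global Hermite interpolant $H$, whereas the paper simply uses the local Taylor polynomial $\sum_{k=0}^{n_j-1}\frac{a_{j,k}}{k!}(z-z_j)^k$ on each disc, so your Step 1 is not actually needed.
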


\begin{proof}
Let $(r_j)_{j=1,\dots,N}$ be such that the discs $\overline{D}(z_j,r_j)$ are pairwise disjoint and contained in $\overline{\DD}\backslash r\overline{\DD}.$ Note that $K=r\overline{\DD}\cup\bigcup_{j=1}^N \overline{D}(z_j,r_j)$ is compact and has a connected complement. Let $f$ be defined on $K$ by
\begin{equation*}
f(z)=
\left\{
\begin{array}{cl}
\displaystyle \frac1{z^M}\sum_{k=0}^{n_j-1}\frac{a_{j,k}}{k!}(z-z_j)^{k}&\textrm{ if }z\in\overline{D}(z_j,r_j)\\[0.3cm]
0&\textrm{ if }z\in r\overline{\DD}.
\end{array}
\right.
\end{equation*}
The function $f$ is continuous on $K$ and holomorphic inside $K.$ By Mergelyan's theorem, we can choose a holomorphic polynomial $Q$ which is arbitrarily close to $f$ on $K$. Then $P=z^M Q$ satisfies the required conditions, thanks to Cauchy's estimates.
\end{proof}

\section{Minimal commutant of composition operators: Necessity}\label{com-nec}

This section provides some necessary conditions for a composition operator to have a minimal commutant.
We say that the complex numbers $\lambda_1,\dots,\lambda_s$ are \emph{independent} if, for any $\alpha,\beta\in\ZZ_+^s,$ $\lambda^\alpha=\lambda^\beta$ implies $\alpha=\beta.$ In particular, if $\lambda_i=\lambda_j$ for some $i\neq j$ or if $\lambda_i=0$ for some $i,$ then $\lambda_1,\dots,\lambda_s$ are not independent.

Let $\mathcal{U}$ be a domain in $\mathbb{C}^d$ with $0 \in \mathcal{U}$. Let $H$ be a reproducing kernel Hilbert space of holomorphic functions on $\mathcal{U}$ such that $z^\alpha \in H$ for all $\alpha \in \mathbb{Z}_+^d$. Let $\varphi: \mathcal{U} \to \mathcal{U}$ be a holomorphic map, and set $A = \varphi'(0)$.

We say that $(H, \varphi)$ is a \emph{Koenigs system} if the following conditions hold:

\begin{itemize}
\item $\varphi$ induces a bounded composition operator on $H$;
    \item There exists a univalent map $\sigma: \mathcal{U} \to \mathbb{C}^d$ such that $\sigma \circ \varphi = A \circ \sigma$ and $0\in\sigma(\mathcal U)$;
    \item $\sigma^\alpha \in H$ for all $\alpha \in \mathbb{Z}_+^d$.
\end{itemize}
Note that the second condition implies that, for any $f\in H,$ the map $f\circ\sigma^{-1}$ is holomorphic at $0.$  Hence there exists a unique sequence of homogeneous polynomials $(P_m(f))_{m \geq 0}$ with $\deg(P_m(f)) = m$ and a neighbourhood $\mathcal{V}$ of $\sigma^{-1}(0)$ such that, for all $z \in \mathcal{V}$,
\[
f(z) = \sum_{m=0}^{\infty} P_m(f) \circ \sigma(z).
\]
In other words, $P_m(f)$ is the homogeneous polynomial of degree $m$ appearing in the Taylor expansion of $f\circ\sigma^{-1}$  at $0$.
Moreover, the intertwining equation $\sigma \circ \varphi = A \circ \sigma$ and the uniqueness of the sequence $(P_m(f))$ imply that  $P_m(f \circ \varphi) = P_m(f) \circ A$ for any $m \in \ZZ_+$ and any $f \in H$.

\begin{theorem}\label{nece}
Let $(H,\varphi)$ be a Koenigs system and set $A=\varphi'(0)$. If $C_{\varphi}$ has a minimal commutant, then the following conditions hold:
\begin{itemize}
  \item[$\bullet$] The canonical Jordan form of $A$ admits at most one Jordan block whose size exceeds $1$, and the size of this Jordan block, if it exists,  is exactly $2$;
  \item[$\bullet$] If $\lambda_1,\dots,\lambda_{\hat d}$ are the eigenvalues of $A$, listed with repetition according to their geometric multiplicities, then they are independent.
\end{itemize}
\end{theorem}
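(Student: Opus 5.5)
The plan is to reduce to finite-dimensional quotients on which $C_\varphi$ acts like $Q\mapsto Q\circ A$ on polynomials, and then apply Lemma \ref{lem:finitedimension}.

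\textbf{Step 1: the truncation maps.} Fix $n\in\NN$. Let $\mathcal P_{\le n}$ denote the (finite-dimensional) space of polynomials on $\CC^d$ of degree at most $n$. Define
\[
\Phi_n:H\to\mathcal P_{\le n},\quad \Phi_n(f)=\sum_{m\le n}P_m(f),
\qquad
J_n:\mathcal P_{\le n}\to H,\quad J_n(Q)=Q\circ\sigma .
\]
$J_n$ takes values in $H$ because $\sigma^\alpha\in H$.

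The map $\Phi_n$ is continuous. Indeed, the Taylor coefficients of $f\circ\sigma^{-1}$ at $0$ are finite linear combinations of derivatives of $f$ at $\sigma^{-1}(0)$. In a reproducing kernel Hilbert space of holomorphic functions, such derivative evaluations are continuous functionals, by the closed graph theorem or by differentiating the kernel.

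We also have $\Phi_nJ_n=\mathrm{id}$. Let $B_n(Q)=Q\circ A$, which preserves $\mathcal P_{\le n}$ and each space of homogeneous polynomials. The identities $P_m(f\circ\varphi)=P_m(f)\circ A$ and $\sigma\circ\varphi=A\circ\sigma$ give the intertwinings
\[
\Phi_nC_\varphi=B_n\Phi_n,\qquad C_\varphi J_n=J_nB_n .
\]

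\textbf{Step 2: $B_n$ is nonderogatory for every $n$.} Let $S\in\{B_n\}'$ and set $T=J_nS\Phi_n$. Then $T$ is bounded because it has finite rank. It commutes with $C_\varphi$, since
\[
TC_\varphi=J_nSB_n\Phi_n=J_nB_nS\Phi_n=C_\varphi T .
\]
By hypothesis, $T$ is a WOT-limit of a net $p_i(C_\varphi)$.

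Now $\Phi_n$ is a continuous map into a finite-dimensional space, so $\Phi_np_i(C_\varphi)\to\Phi_nT$ pointwise. On the other hand, $\Phi_np_i(C_\varphi)=p_i(B_n)\Phi_n$ and $\Phi_nT=S\Phi_n$. Since $\Phi_n$ is onto (because $\Phi_nJ_n=\mathrm{id}$), we get $p_i(B_n)\to S$. The set $\alg(B_n)$ is a finite-dimensional, hence closed, subspace, so $S\in\alg(B_n)$. Therefore $B_n$ has a minimal commutant, and by Lemma \ref{lem:finitedimension} it is nonderogatory for every $n$.

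\textbf{Step 3: reading off the conditions.} We take a Jordan basis of linear forms for $A^T$. For each Jordan block with eigenvalue $\lambda_i$, $1\le i\le\hat d$, this gives an eigenform $\ell_i$ with $\ell_i\circ A=\lambda_i\ell_i$. The forms $\ell_1,\dots,\ell_{\hat d}$ are linearly independent, so the monomials $\ell^\alpha$, $\alpha\in\ZZ_+^{\hat d}$, are linearly independent. Each satisfies $B_n\ell^\alpha=\lambda^\alpha\ell^\alpha$.

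\emph{Independence.} Suppose $\lambda^\alpha=\lambda^\beta$ with $\alpha\ne\beta$. Choosing $n\ge|\alpha|,|\beta|$, the eigenvalue $\lambda^\alpha$ of $B_n$ has geometric multiplicity at least $2$, a contradiction. This covers the cases $\lambda_i=\lambda_j$ for $i\neq j$ (compare $\ell_i$ and $\ell_j$) and $\lambda_i=0$ (compare $\ell_i$ and $\ell_i^2$). In particular all $\lambda_i$ are nonzero.

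\emph{Jordan blocks.} We use the degree-$2$ homogeneous part of $B_2$, which is the action on the symmetric square.
\begin{itemize}
\item Suppose there is a block of size $\ge3$, with eigenvalue $\lambda\neq0$, and restrict to the first three vectors $\ell_1,\ell_2,\ell_3$ of its chain. The action of $B_2$ on $\mathrm{Sym}^2$ of this span is $\mathrm{Sym}^2(J_3(\lambda))$, whose Jordan form is $J_5(\lambda^2)\oplus J_1(\lambda^2)$. Concretely, a suitable correction of $\ell_1\ell_3-\tfrac12\ell_2^2$ is a second eigenvector besides $\ell_1^2$.
\item Suppose there are two blocks of size $\ge2$, with eigenvalues $\lambda,\mu\neq0$ and chains $\ell_1,\ell_2$ and $m_1,m_2$. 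The span of the products $\ell_am_b$ carries $J_2(\lambda)\otimes J_2(\mu)\cong J_3(\lambda\mu)\oplus J_1(\lambda\mu)$. This gives two independent eigenvectors with eigenvalue $\lambda\mu$, namely $\ell_1m_1$ and a combination of $\ell_1m_2$, $\ell_2m_1$, $\ell_2m_2$.
\end{itemize}
Both cases contradict Step 2.

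I expect the main obstacle to be this last verification: exhibiting, or justifying via the Clebsch–Gordan decomposition with nonzero eigenvalues, the second eigenvector explicitly in the Jordan-block cases. Steps 1 and 2 are soft.
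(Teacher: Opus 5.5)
Your proof is correct and follows essentially the paper's route. Your $J_n\Phi_n$ is the projection of $H=\mathcal P_N\oplus\mathcal Q_N$ onto $\mathcal P_N$, and Step 2 is a hands-on proof of Lemma \ref{sum-com} in this setting; it also supplies the continuity of the truncation, which the paper uses implicitly when it invokes that lemma. Step 3 produces the paper's eigenvectors: for instance, your corrected form $\ell_1\ell_3-\frac12\ell_2^2+\frac{1}{2\lambda}\ell_1\ell_2$ is the paper's $P_2$ divided by $2\lambda$. The only organizational difference is that you establish independence first to guarantee $\lambda,\mu\neq 0$, whereas the paper handles $\lambda=0$ separately in the two-block case.
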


We may observe that the first condition implies that $\hat d=d$ or $d-1$ and that $\hat d$ is the number  of Jordan blocks of $A$.
\begin{proof}
For $N \geq 0$, let
\begin{align*}
\mathcal{P}_N& = \left\{ P \circ \sigma : P \in \mathbb C [z_1, \dots, z_d],\ \text{deg}(P) \leq N \right\}\\
\mathcal{Q}_N& = \bigcap_{m=0}^{N} \ker(P_m),
\end{align*}
where $P_m$ is the linear map from $H$ to $\mathbb C[z_1,\dots,z_d]$ sending $f$ to $P_m(f).$
Note that
\begin{itemize}
\item $\mathcal{P}_N$ is $C_\varphi$-invariant, since $\sigma \circ \varphi = A \circ \sigma$.
\item $\mathcal{Q}_N$ is $C_\varphi$-invariant, since $P_m(f \circ \mathcal{\varphi}) = P_m(f) \circ A$ for $m=0,\dots,N$.
\item $\mathcal{P}_N\cap \mathcal{Q}_N=\{0\}$: this follows from $f=\sum_{m=0}^N P_m(f)\circ\sigma$ for all $f\in \mathcal{P}_N$.
\item $H = \mathcal{P}_N \oplus \mathcal{Q}_N$, writing any $f\in H$ as
$$f=\sum_{m=0}^N P_m(f)\circ\sigma+\left(f-\sum_{m=0}^N P_m(f)\circ\sigma\right).$$
\end{itemize}
 Therefore, by Lemma \ref{sum-com}, the restriction ${C_{\varphi}}_{|\mathcal{P}_N}$ also has a minimal commutant. Since $\mathcal P_N$ is finite-dimensional, ${C_{\varphi}}_{|\mathcal P_N}$ must be nonderogatory.

Assume first that the canonical Jordan form of $A$ admits two Jordan blocks of size at least $2$ associated to the eigenvalues $\lambda$ and $\mu.$
Let $x_1^*, x_2^*, x_3^*, x_4^* \in (\mathbb{C}^d)^*$ be such that
$$\begin{array}{lll}
x_1^* \circ A = \lambda x_1^*&&x_3^* \circ A = \mu x_3^*\\
x_2^* \circ A = \lambda x_2^*+x_1^*&&
x_4^* \circ A = \mu x_4^* + x_3^*.
\end{array}$$

We transfer these eigenvectors to $C_\varphi$ by setting $L_i = x_i^* \circ \sigma$, $i=1,\dots,4$, to get
$$\begin{array}{lll}
C_\varphi(L_1) = \lambda L_1&&C_\varphi(L_3) = \mu L_3\\
C_\varphi(L_2) = \lambda L_2 + L_1&&
C_\varphi(L_4) = \mu L_4 + L_3.
\end{array}$$

Then, let us set $P_1=L_1L_3$ and $P_2=L_2L_3-\frac{\mu}{\lambda}L_1L_4$ if $\lambda\neq 0,$ and $P_1=L_1^2,$ $P_2=L_1L_2$ if $\lambda=0$. It is easy to show that $P_1,P_2\in\mathcal P_2$ and that $(P_1,P_2)$ is a linearly independent family of eigenvectors of $C_\varphi$ associated to $\lambda\mu,$
which contradicts the nonderogatoriness of ${C_{\varphi}}_{|\mathcal P_2}$.

If the canonical Jordan form of $A$ admits one Jordan block of size at least 3, then there exist $L_1, L_2, L_3 \in \mathcal{P}_1$ such that
\[
C_\varphi(L_1) = \lambda L_1, \quad C_\varphi(L_2) = \lambda L_2 + L_1, \quad C_\varphi(L_3) = \lambda L_3 + L_2.
\]
Then, set $P_1=L_1^2$ and $P_2=L_1L_2+2\lambda L_1L_3-\lambda L_2^2$. It follows that $(P_1,P_2)$ is a linearly independent family of eigenvectors of $C_\varphi$ associated with $\lambda^2,$ which is again a contradiction.

Finally, assume that there exist $\alpha \neq \beta$ such that $\lambda^{\alpha} = \lambda^{\beta}$.
Let $x_1^*, \ldots, x_{\hat d}^*$ be such that $x_i^* \circ A = \lambda_i x_i^*$ for all $i$ and consider $L_1 = (x^*)^\alpha \circ \sigma$, $L_2 = (x^*)^\beta \circ \sigma$. Then $L_1, L_2 \in \mathcal{P}_N$ with $N = \max( |\alpha|, |\beta|)$ and
$(L_1, L_2)$ is a family of linearly independent vectors associated to the same eigenvalue of ${C_\varphi}_{|\mathcal{P}_N}$.
 This is again a contradiction.
\end{proof}

\section{Minimal commutant of composition operators: Sufficiency}\label{com-suf}

\subsection{Strong Koenigs systems}

To obtain a sufficient condition for the minimal commutant property, we require additional assumptions on the Koenigs system $(H,\varphi).$ We start with a Koenigs system $(H,\varphi)$ with $A=\varphi'(0)$ and Koenigs map $\sigma$. For $n\in\ZZ_+,$ we define $\mathcal F_n=\vect(\sigma^\alpha:\ |\alpha|=n)$ and $\mathcal G_n=\vect(z^\alpha:\ |\alpha|=n)$. We say that $(H,\varphi)$ is a \emph{strong Koenigs system} if the following conditions hold:
\begin{enumerate}
\item[(SK1)] For any compact set $K\subset\mathcal U,$ $\sup_{z\in K}\|k_z\|<\infty,$ where $k_z$ is the reproducing kernel at $z\in\mathcal U.$
\item[(SK2)] For any basis $(x_1^*,\dots,x_d^*)$ of $(\mathbb C^d)^*,$ denoting by $L_i=x_i^*\circ\sigma,$ there exist $C,M>0$ such that, for all $f\in H,$ there exist a neighbourhood $\mathcal V$ of $\sigma^{-1}(0)$ and a sequence of complex numbers $(a_\alpha)_{\alpha\in\NNZ^d}$ such that, for all $z\in\mathcal V,$
$$f(z)=\sum_{\alpha\in\NNZ^d} a_\alpha L^ \alpha(z)$$
with absolute and uniform convergence on compact subsets‌ of $\mathcal V$, and for all $\alpha\in\NNZ^d,$
\begin{equation}\label{eq:sks4}
\|a_\alpha L^\alpha\|\leq C M^{|\alpha|}\|f\|.
\end{equation}
\item[(SK3)] There exist $C,M>0$ such that, for all $n\geq 0,$
\begin{eqnarray}
\left\|C_{\sigma:\mathcal G_n\to\mathcal F_n}\right\|&\leq& C M^n \label{eq:sks5} \\
\left\|C_{\sigma^{-1}:\mathcal F_n\to\mathcal G_n}\right\|&\leq& C M^n. \label{eq:sks6}
\end{eqnarray}
\item[(SK4)] There exist $C>0$ and $a\in(0,1)$ such that, for all $n\geq 0,$
\begin{equation}
\left\|C_{A:\mathcal G_n\to\mathcal G_n}\right\|\leq C a^n. \label{eq:sks7}
\end{equation}
\item[(SK5)] There exists a domain $\Omega\subset\CC$ such that, for all $w\in\Omega,$ $w\sigma(\mathcal U)\subset \sigma(\mathcal U)$, $[0,1)\subset\Omega$ and, setting $\varphi_w=\sigma^{-1}(w\sigma)$ for all $w\in\Omega,$
\begin{equation}
\sup_{w\in[0,1)}\|C_{\varphi_w}\|<\infty  \label{eq:sks2} \\
\end{equation}
\begin{equation}
\forall f,g\in H,\ w\in\Omega\mapsto  \langle C_{\varphi_w}(f),g\rangle\textrm{ is holomorphic.}  \label{eq:sks3}
\end{equation}
\end{enumerate}

\bigskip

These conditions may not be particularly appealing. However, some of them are consequences of simpler assumptions.

\begin{lemma}\label{lem:suf1}
Let $(H,\varphi)$ be a Koenigs system satisfying {\normalfont (SK1)}.  Assume that there exist $C,M>0$ such that, for all $\alpha\in\ZZ_+^d,$
\begin{equation}\label{eq:sks1}
\|z^\alpha\|_H\leq CM^{|\alpha|}\textrm{ and }\|\sigma^\alpha\|_H\leq CM^{|\alpha|}.
\end{equation}
Then {\normalfont (SK2)} and {\normalfont (SK3)} hold.
\end{lemma}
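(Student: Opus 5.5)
The plan is to derive everything from two ingredients. The first is a uniform Cauchy estimate, which comes from (SK1) together with the reproducing property. The second is the crude triangle-inequality bound obtained by expanding in monomials and using (\ref{eq:sks1}). I will also use repeatedly that $\dim\mathcal G_n=\binom{n+d-1}{d-1}\le 2^{n+d-1}$, so summing over $|\alpha|=n$ costs only a geometric factor.

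\emph{Step 1: coefficient estimates.} Fix $r>0$ so that the closed polydisc $\overline{\DD}(0,r)^d$ is contained in $\mathcal U$. By (SK1) there is $C_0$ with $|h(z)|=|\langle h,k_z\rangle|\le C_0\|h\|$ for all $h\in H$ and all $z$ in this polydisc. Cauchy's estimates then give, for any $P=\sum_{|\alpha|=n}c_\alpha z^\alpha\in\mathcal G_n$, the bound $|c_\alpha|\le C_0 r^{-n}\|P\|$.

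Similarly, let $w_0=\sigma^{-1}(0)$. Since $\sigma$ is univalent, $\sigma(\mathcal U)$ is open and $\sigma^{-1}$ is holomorphic on it. Choose $\rho>0$ such that $\overline{\DD}(0,\rho)^d\subset\sigma(\mathcal U)$, so that $K=\sigma^{-1}(\overline{\DD}(0,\rho)^d)$ is a compact subset of $\mathcal U$. Then $|f\circ\sigma^{-1}|\le C_K\|f\|$ on that polydisc for every $f\in H$. Hence the Taylor coefficients of $f\circ\sigma^{-1}$ at $0$ are bounded by $C_K\rho^{-|\alpha|}\|f\|$. Since $\sigma$ is open, the functions $\sigma^\alpha$ are linearly independent; so $C_{\sigma^{-1}}$ is well defined on $\mathcal F_n$, and for $f=\sum_{|\alpha|=n}c_\alpha\sigma^\alpha$ the $c_\alpha$ are exactly these Taylor coefficients.

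\emph{Step 2: (SK3).} For $P\in\mathcal G_n$, Step 1 and (\ref{eq:sks1}) give
\[
\|P\circ\sigma\|\le\sum_{|\alpha|=n}|c_\alpha|\,\|\sigma^\alpha\|\le 2^{n+d}C_0r^{-n}CM^n\|P\|,
\]
which is (\ref{eq:sks5}). For $f\in\mathcal F_n$ the same computation, using $\|z^\alpha\|\le CM^n$ and $|c_\alpha|\le C_K\rho^{-n}\|f\|$, yields (\ref{eq:sks6}).

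\emph{Step 3: (SK2).} Let $B$ be the invertible matrix with $L=B\sigma$, and set $g=f\circ\sigma^{-1}\circ B^{-1}$. I will choose $\rho'>0$ independent of $f$ so that $B^{-1}(\overline{\DD}(0,\rho')^d)\subset\DD(0,\rho)^d$. Then $g$ is holomorphic near $\overline{\DD}(0,\rho')^d$ with $|g|\le C_K\|f\|$ there. So $g(u)=\sum a_\alpha u^\alpha$ with $|a_\alpha|\le C_K\rho'^{-|\alpha|}\|f\|$, and the series converges absolutely and uniformly on compact subsets of the open polydisc. Setting $\mathcal V=\sigma^{-1}(B^{-1}(\DD(0,\rho')^d))$, we get $f(z)=\sum a_\alpha L^\alpha(z)$ on $\mathcal V$, with the required convergence, because compacts of $\mathcal V$ are mapped by $L$ into compacts of the polydisc.

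It remains to bound $\|L^\alpha\|$. Write $L^\alpha=C_\sigma(Q_\alpha)$ with $Q_\alpha=(Bz)^\alpha\in\mathcal G_{|\alpha|}$. Expanding the product of $|\alpha|$ linear forms, the coefficients of $Q_\alpha$ have total absolute sum at most $(d\|B\|)^{|\alpha|}$, so $\|Q_\alpha\|\le C(dM\|B\|)^{|\alpha|}$. Then (\ref{eq:sks5}) from Step 2 gives $\|L^\alpha\|\le C'M'^{|\alpha|}$, and combining with the bound on $a_\alpha$ yields (\ref{eq:sks4}).

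The main point to handle carefully is Step 1 for $\sigma^{-1}$. The radius $\rho$ (and then $\rho'$) must be chosen once and for all, independently of $f$, so that the estimates hold with uniform constants. This works because $\sigma(\mathcal U)$ is a fixed open neighbourhood of $0$ and (SK1) is applied on a fixed compact set.
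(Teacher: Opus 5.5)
Your proof is correct and follows essentially the same route as the paper. You get uniform Cauchy estimates on a fixed polydisc from (SK1), then use the triangle inequality with the monomial bounds \eqref{eq:sks1}, summed over exponentially many multi-indices, to obtain (SK3) and (SK2). The only difference is cosmetic: for (SK2) you Taylor-expand $f\circ\sigma^{-1}\circ B^{-1}$ directly and bound $\|L^\alpha\|$ through \eqref{eq:sks5}, whereas the paper first expands in the $\sigma^\alpha$ and then regroups after the change of basis; your version is slightly cleaner because it needs no rearrangement of the series.
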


\begin{proof}
Let $f\in H.$ There exists a neighbourhood $\mathcal V_1$ of $0$ such that, for all $z\in\mathcal V_1,$
$$f\circ\sigma^{-1}(z)=\sum_{\alpha\in\NNZ^d}b_\alpha z^\alpha,$$
where $b_\alpha=\frac{\partial^\alpha (f\circ\sigma^{-1})(0)}{\alpha!}.$ Hence, there exists a neighbourhood $\mathcal V_2$ of $\sigma^{-1}(0)$ such that for all $z\in\mathcal V_2,$
$$f(z)=\sum_{\alpha\in\NNZ^d}b_\alpha \sigma^{\alpha}(z).$$
Let $r>0$ be such that $\sigma^{-1}(r\overline{\DD}^d)\subset\mathcal U.$ By Cauchy's inequalities, for all $\alpha\in\NNZ^d,$
\begin{align}
|b_\alpha|&\leq \frac1{r^{|\alpha|}} \sup_{z\in\sigma^{-1}(r\overline{\DD}^d)} |f(z)| \nonumber \\
&\leq C M^{|\alpha|} \|f\|. \label{eq:estimatecoef}
\end{align}
Let $e^*=(e_1^*,\dots,e_d^*)^T$ where $(e_1^*,\dots,e_d^*)$ is the canonical basis of $(\mathbb C^d)^*$, and let $x^*=(x_1^*,\dots,x_d^*)^T.$ Let $P\in GL_d(\CC)$ be such that $e^*=Px^*$ and let $P_1,\dots,P_d$ be the rows of $P.$ Then
$$f=\sum_{\alpha\in\NNZ^d}b_\alpha\prod_{j=1}^d (P_jx^*\circ\sigma)^{\alpha_j}.$$
For a fixed multi-index $\alpha\in\NNZ^d$ with $|\alpha|=n,$  expanding the product, we find a sum of $d^n$ terms, each of which can be written as $c_\beta (x^*\circ\sigma)^{\beta}$ with $|\beta|=n$ and
$|c_\beta|\leq  p_{\textrm{max}}^n |a_\alpha|,$ where $p_{\max}=\max_{i,j}|p_{i,j}|.$ Since there are $\binom{n+d-1}d\leq n^d\leq M^n$ such multi-indices, grouping the terms together gives
$$f(z)=\sum_{\alpha\in\NNZ^d}a_\alpha L^\alpha(z)$$
with $|a_\alpha|\leq C M^{|\alpha|}\|f\|$ for some $C,M>0,$ with absolute and uniform convergence on compact subsets‌ of the intersection
of $\mathcal V_2$ and of some polydisc centered at $\sigma^{-1}(0).$ Moreover, we write
$$L^\alpha=\prod_{j=1}^d (Q_j e^*\circ\sigma)^{\alpha_j},$$
where $Q=P^{-1}$ and $Q_1,\dots,Q_d$ are the rows of $Q.$ Arguing as above, we get
$$\|L^\alpha\|\leq CM^{|\alpha|}$$
from the second half of \eqref{eq:sks1}, which yields \eqref{eq:sks4}.

Let now $n\geq 0$ and $f=\sum_{|\alpha|=n}b_\alpha \sigma^\alpha\in\mathcal F_n.$ Then, by \eqref{eq:estimatecoef}, $|b_\alpha|\leq C M^n \|f\|$ for all $\alpha$ with $|\alpha|=n$ and we conclude that
$$\|C_{\sigma^{-1}}(f)\|\leq \sum_{|\alpha|=n} |b_\alpha|  \|z^\alpha\|\leq CM^n\|f\|$$
by the first half of  \eqref{eq:sks1}.
Finally, let $g=\sum_{|\alpha|=n}a_\alpha z^\alpha\in\mathcal G_n.$ Again, Cauchy's inequalities yield
the existence of $C,M>0$ such that  $|a_\alpha|\leq CM^n \|g\|$ for all $\alpha\in\NNZ^d$ with $|\alpha|=n.$
Then
$$\|C_\sigma(g)\|\leq \sum_{|\alpha|=n} |a_\alpha| \|\sigma^\alpha\|\leq C M^n \|g\|$$
by the second half of \eqref{eq:sks1}.
\end{proof}

\begin{lemma}\label{lem:suf2}
{Let $(H,\varphi)$ be a Koenigs system with $A=\varphi'(0).$}
Assume that there exists $\lambda>1$ such that $C_{\lambda A}$ is bounded  on $H.$ Then
{\normalfont (SK4)} is satisfied.
\end{lemma}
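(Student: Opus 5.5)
The key observation is that homogeneity converts the boundedness of $C_{\lambda A}$ into a geometric decay factor. For any $g\in\mathcal G_n$, the polynomial $g$ is homogeneous of degree $n$, so
$$g\circ(\lambda A)(z)=g(\lambda Az)=\lambda^n g(Az).$$
In other words, $C_A(g)=\lambda^{-n}C_{\lambda A}(g)$. Note also that $C_A$ maps $\mathcal G_n$ into $\mathcal G_n$, since composing a homogeneous polynomial of degree $n$ with a linear map gives again a homogeneous polynomial of degree $n$ (possibly zero). Moreover $\mathcal G_n\subset H$, because by the standing assumption $z^\alpha\in H$ for all $\alpha$. Hence $C_{A:\mathcal G_n\to\mathcal G_n}$ is well defined and, in the norm of $H$,
$$\|C_A(g)\|=\lambda^{-n}\|C_{\lambda A}(g)\|\leq \lambda^{-n}\|C_{\lambda A}\|_{\mathcal B(H)}\|g\|.$$

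Setting $C=\|C_{\lambda A}\|_{\mathcal B(H)}$ and $a=1/\lambda\in(0,1)$ gives $\|C_{A:\mathcal G_n\to\mathcal G_n}\|\leq Ca^n$ for every $n\geq0$. This is exactly \eqref{eq:sks7}, so (SK4) holds.

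The argument has no genuine obstacle. The only point to check is that $C_{\lambda A}$ acting on $H$ is indeed given by composition with the linear map $z\mapsto\lambda Az$, restricted to $\mathcal G_n\subset H$. This is what the hypothesis means: $\lambda A$ maps $\mathcal U$ into $\mathcal U$, or at least composition with it defines a bounded operator on $H$. Then the homogeneity identity above is a pointwise identity of functions on $\mathcal U$, and the estimate follows.
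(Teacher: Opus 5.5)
Your proof is correct and is essentially the paper's argument: since $g\in\mathcal G_n$ is homogeneous of degree $n$, one has $C_A(g)=\lambda^{-n}C_{\lambda A}(g)$. This gives $\|C_{A:\mathcal G_n\to\mathcal G_n}\|\leq \|C_{\lambda A}\|\,\lambda^{-n}$, which is (SK4) with $C=\|C_{\lambda A}\|$ and $a=1/\lambda$.
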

\begin{proof}
Let $n\geq 0$ and $g=\sum_{|\alpha|=n}a_\alpha z^\alpha\in\mathcal G_n.$
Then
$$C_A(g)=\frac1{\lambda^n} C_{\lambda A}(g)$$
and we get
\begin{equation*}
\left\|C_{A:\mathcal G_n\to\mathcal G_n}\right\|\leq \|C_{\lambda A}\|\left(\frac 1{\lambda}\right)^n.
\end{equation*}
\end{proof}

\subsection{Statement and proof of the main result}

\begin{theorem}\label{thm:sufficiency}
Let $(H,\varphi)$ be a strong Koenigs system and set $A=\varphi'(0).$ Assume that the spectral radius of $A$ is less than $1$ and that
\begin{itemize}
  \item[$\bullet$] the canonical Jordan form of $A$ admits at most one Jordan block of size greater than $1$, and the size of this Jordan block, if it exists, is exactly $2$;
  \item[$\bullet$] if $\lambda_1,\dots,\lambda_{\hat d}$ are the eigenvalues of $A$, listed according to their geometric multiplicities, then they are independent.
\end{itemize}
Then $C_\varphi$ has a minimal commutant.
\end{theorem}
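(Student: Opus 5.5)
We take $T\in\{C_\varphi\}'$ and show that it is a weak-operator limit of polynomials in $C_\varphi$. We work in a basis $x_1^*,\dots,x_d^*$ of $(\CC^d)^*$ adapted to the Jordan form of $A$. Here $x_i^*\circ A=\lambda_i x_i^*$, except for at most one pair with $x_2^*\circ A=\mu x_2^*+x_1^*$ and $x_1^*\circ A=\mu x_1^*$. We set $L_i=x_i^*\circ\sigma$. Then $\mathcal F_n=\vect(L^\alpha:|\alpha|=n)$ is $C_\varphi$-invariant, and $C_\varphi|_{\mathcal F_n}$ is conjugate to $C_A|_{\mathcal G_n}$. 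The eigenvalues of $C_\varphi$ on $\mathcal P_N=\bigoplus_{n\le N}\mathcal F_n$ are the numbers $\lambda^\alpha$. Since $\lambda_1,\dots,\lambda_{\hat d}$ are independent, distinct multi-indices give distinct eigenvalues. In the monomials $L^\alpha$ the Jordan pair only creates chains in which the exponent of $L_2$ is lowered, so we expect $C_\varphi|_{\mathcal P_N}$ to be nonderogatory. We must check this concretely on the monomials. In particular, the hypothesis that the block has size exactly $2$ and that it is unique should make each generalized eigenspace $E_\nu\cap\mathcal P_N$ cyclic.

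\textbf{Step 1: $T$ acts as a polynomial on each generalized eigenspace.} Fix $r\in(0,1)$. Only finitely many $\nu=\lambda^\alpha$ satisfy $|\nu|>r$, because the spectral radius of $A$ is less than $1$. For $N$ large, (SK3) and (SK4) show that the spectral radius of $C_\varphi|_{\mathcal Q_N}$ is at most $r$. Indeed, the norm of $C_\varphi^k$ on $\mathcal F_n$ is at most $CM^{2n}\|C_{A^k}|_{\mathcal G_n}\|$. To make this geometric in $n$ for large $k$, we will apply (SK4) after checking that it also controls the powers of $A$; alternatively, we first pass to an iterate of $\varphi$ using Lemma \ref{lem:powercommutant}. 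Consequently the generalized eigenspace $E_\nu=\ker(C_\varphi-\nu)^2$ of $C_\varphi$ in $H$ lies inside $\mathcal P_N$ for every $\nu$ with $|\nu|>r$. Since $T$ commutes with $C_\varphi$, it preserves each $E_\nu$. By Lemma \ref{lem:finitedimension} and cyclicity, $T|_{E_\nu}=p_\nu(C_\varphi)|_{E_\nu}$, where $p_\nu$ is determined by a value $a_{\nu,0}$ and, in the Jordan case, a derivative $a_{\nu,1}$.

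\textbf{Step 2: density.} We show that $\vect(L^\alpha)$ is dense in $H$. Let $g\perp L^\alpha$ for all $\alpha$. For $w$ small, (SK2) gives $f\circ\varphi_w=\sum a_\alpha w^{|\alpha|}L^\alpha$ with norm convergence, using \eqref{eq:sks4}. Hence $\langle C_{\varphi_w}f,g\rangle=0$ near $0$. By \eqref{eq:sks3} this vanishes on all of $\Omega$. Letting $w\to1^-$ along $[0,1)$, the uniform bound \eqref{eq:sks2} and pointwise convergence via (SK1) give $C_{\varphi_w}f\to f$ weakly, so $\langle f,g\rangle=0$.

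\textbf{Step 3: interpolating polynomials with uniform bounds.} Order the eigenvalues $\nu_j=\lambda^{\alpha}$ by decreasing modulus. Apply Lemma \ref{inter-lem} with $\varepsilon=1/m$, radius $r_m\to0$, $n_j\in\{1,2\}$, prescribed data $a_{j,k}$ coming from $T$, and a large exponent $M_0$. This yields polynomials $P_m$ with $P_m^{(k)}(\nu_j)\approx a_{j,k}$ for $|\nu_j|>r_m$ and $|P_m(z)|\le\frac1m|z|^{M_0}$ for $|z|\le r_m$. The main obstacle is to show $\sup_m\|P_m(C_\varphi)\|<\infty$. We split $f=\sum a_\alpha L^\alpha$ into the part $|\alpha|\le N_m$, where $P_m(C_\varphi)\approx T$ and the finitely many coefficients are controlled by $T$, and the tail. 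On each $\mathcal F_n$ in the tail, $P_m(C_\varphi)=C_\sigma P_m(C_A)C_{\sigma^{-1}}$. We bound $\|P_m(C_A)|_{\mathcal G_n}\|$ by a Cauchy integral over a circle of radius about $Ca^n\le r_m$, where $|P_m|\le\frac1m (Ca^n)^{M_0}$, combined with a resolvent bound. Choosing $M_0$ large absorbs the factors $M^{2n}$ from (SK3) and $M^{|\alpha|}$ from (SK2), so the tail series converges uniformly. The Jordan pairs are handled by the same Cauchy estimates. Uniform boundedness together with convergence of $P_m(C_\varphi)L^\alpha\to TL^\alpha$ for each $\alpha$ (which holds since $L^\alpha\in E_\nu$ and $\varepsilon\to0$) and the density from Step 2 gives $P_m(C_\varphi)\to T$ in the weak operator topology.
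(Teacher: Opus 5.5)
Your Steps 1 and 2 are fine in outline. Step 1 is a spectral-radius alternative to Proposition \ref{prop:eigenspace}, though you still owe the check that each generalized eigenspace is cyclic. Step 2 is correct. Step 3, however, has a genuine gap: the bound $\sup_m\|P_m(C_\varphi)\|<\infty$ is not just unproved, it is false in general for the polynomials you build.

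On the head you write that $P_m(C_\varphi)\approx T$ and that ``the finitely many coefficients are controlled by $T$''. But (SK2) only gives $\|a_\alpha L^\alpha\|\le CM^{|\alpha|}\|f\|$. So the head $\Pi_r f:=\sum_{\beta\in I_r}f_\beta$ is only bounded by $\sum_{\beta\in I_r}CM^{|\beta|}\|f\|$, which blows up as $r\to0$. This loss is real, not an artefact of the estimate. Your own interpolation and tail estimates (the analogues of Facts 2--4 in the paper, with $TC_{\varphi_w}$ replaced by $T$) show that $\|P_m(C_\varphi)-T\Pi_{r_m}\|\to0$. Taking $T=I$, boundedness of $(P_m(C_\varphi))$ would force $\sup_m\|\Pi_{r_m}\|<\infty$, hence pointwise boundedness of the partial sums of $\sum_n P_n(f)\circ\sigma$ for every $f\in H$.

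This fails already on $H^2(\DD)$ for $\varphi(z)=az/((1-a)pz+1)$ with $a,p\in(0,1)$, which falls under the theorem and has Koenigs map $\sigma(z)=z/(pz+1)$. For $f=k_s$ one computes $P_n(f)(u)=s(p+s)^{n-1}u^n$ for $n\geq 1$. Near $z=-1$ we have $|\sigma(z)|$ close to $1/(1-p)$, which exceeds $1/(p+s)$ once $s>1-2p$. So the partial sums $\sum_{n\le N}s(p+s)^{n-1}\sigma(z)^n$ are unbounded there, while $H$-bounded sequences are pointwise bounded by (SK1).

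The missing idea is the damping by $C_{\varphi_w}$. For $|w|\le\delta$ with $\delta M<1$, the pieces $w^{|\beta|}f_\beta$ have norm at most $C(\delta M)^{|\beta|}\|f\|$. Hence $TC_{\varphi_w}(I-\Pi_r)$ is uniformly small, and $TC_{\varphi_w}$ is a \emph{norm} limit of polynomials in $C_\varphi$ (Proposition \ref{prop:approxim}), with no uniform bound on the approximants required.

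One then never needs a sequence converging to $T$. For any WOT-continuous functional $\Lambda$ vanishing on $\alg(C_\varphi)$, the map $w\mapsto\Lambda(TC_{\varphi_w})$ is holomorphic on $\Omega$ by \eqref{eq:sks3}. It vanishes on $\delta\DD$, hence on all of $\Omega$. Then $\Lambda(T)=\lim_{w\to1^-}\Lambda(TC_{\varphi_w})=0$ by \eqref{eq:sks2} and (SK1), and Hahn--Banach concludes.

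Your Step 2 already contains exactly this continuation-and-limit mechanism, but you use it only to prove density. It has to be applied to $w\mapsto\Lambda(TC_{\varphi_w})$ instead.
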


The remainder of this section is devoted to proving this theorem. We start with a strong Koenigs system satisfying the assumptions of the previous theorem and we consider $x_1^*,\dots,x_d^*$ a basis of $(\CC^d)^*$ such that
\begin{itemize}
\item[$\bullet$] $x_i^*\circ A=\lambda_i x_i^*$ for all $i=1,\dots,\hat d$.
\item[$\bullet$] $x_d^*\circ A=\lambda_{d-1}(x_d^*+x_{d-1}^*)$ if $\hat d=d-1.$
\end{itemize}
For $i=1,\dots,d$ we set $L_i=x_i^*\circ\sigma$ and for $\beta\in\NNZdhat,$ define
\begin{itemize}
\item[$\bullet$] $m(\beta)=0$ if $\hat d=d;$  $m(\beta)=\beta_{d-1}$ otherwise.
\item[$\bullet$] $\Gamma(\beta)=\{\beta\}$ if $\hat d=d;$
$$\Gamma(\beta)=\{\alpha\in\NNZ^d:\ \alpha_1=\beta_1,\dots,\alpha_{d-2}=\beta_{d-2},\alpha_{d-1}+\alpha_d=\beta_{d-1}\}$$
otherwise.
\item[$\bullet$] $\mathcal P_\beta=\vect(L^\alpha:\ \alpha\in \Gamma(\beta))$ and $Q_\beta=\vect((x^*)^\alpha:\ \alpha\in\Gamma(\beta)).$
\end{itemize}
We  observe that $\mathcal P_\beta$ is $C_{\varphi_w}$-invariant and  $C_{\varphi_w}(f)=w^{|\beta|}f$ for any $f\in \mathcal P_\beta$ and any $w\in\Omega.$

\medskip

The proof of Theorem \ref{thm:sufficiency} will require two propositions. The first one deals about the generalized eigenspaces of $C_\varphi.$
\begin{proposition}\label{prop:eigenspace}
Under the assumptions of Theorem \ref{thm:sufficiency}, for all $\beta\in\NNZdhat,$
$\ker((C_\varphi-\lambda^\beta I)^{m(\beta)+1})=\mathcal P_\beta.$
\end{proposition}
\begin{proof}
We assume $\hat d=d-1.$ The case $\hat d=d$ is similar (and even simpler) and left to the reader. First note that $\mathcal P_\beta$ is $C_\varphi$-invariant.  Moreover,
$L_1^{\beta_1}\cdots L_{d-2}^{\beta_{d-2}}L_{d-1}^{m(\beta)-k}L_d^k,$ $k=0,\dots,m(\beta),$ is a basis of $\mathcal P_\beta.$ The matrix of $C_{\varphi|\mathcal P_\beta}$ with respect to this basis is
\begin{equation}
\lambda^\beta\begin{pmatrix}
1&&\binom ji\\
&\ddots&\\
0&&1
\end{pmatrix},\ 0\leq i\leq j\leq m(\beta).
\label{eq:propeigenspace}
\end{equation}
This shows that $(C_\varphi-\lambda^\beta I)^{m(\beta)+1}=0$ on $\mathcal P_\beta,$
that is, $\mathcal P_\beta\subset\ker((C_\varphi-\lambda^\beta I)^{m(\beta)+1})$.

For the reverse inclusion, since $\dim(\mathcal P_\beta)=m(\beta)+1,$ by the theory of iterated kernels, it suffices to show that $\dim(\ker(C_\varphi-\lambda^\beta I))\leq 1.$ Let $f\in \ker(C_\varphi-\lambda^\beta I)$, and write $A=P^{-1}JP$ with $P\in GL_d(\CC)$ and
$J$ in Jordan form,
$$J=\begin{pmatrix}
\lambda_1&&&0\\
&\ddots&&\\
&&\lambda_{d-1}&\lambda_{d-1}\\
0&&&\lambda_{d-1}
\end{pmatrix}.$$
Set $g=f\circ (P\circ \sigma)^{-1}$ which is holomorphic at   $0\in\sigma(\mathcal U).$
Then, in a neighbourhood of $0,$ using $\sigma^{-1}\circ A=\varphi\circ\sigma^{-1},$
\begin{align*}
g\circ J&=f\circ(P\circ \sigma)^{-1}\circ J\\
&=f\circ\sigma^{-1}\circ A\circ P^{-1}\\
&=f\circ\varphi\circ(P\sigma)^{-1}\\
&=\lambda^\beta g.
\end{align*}
 For $\gamma\in\NNZdhat,$  set $\mathcal G_\gamma=\vect(z^\alpha:\ \alpha\in\Gamma(\gamma))$. Then $\mathcal G_\gamma\subset \mathcal G_{|\gamma|}$, $C_J(\mathcal G_\gamma)\subset\mathcal G_\gamma$ and  the matrix of $C_{J|\mathcal G_\gamma}$ with respect to the basis $z^\alpha,$ $\alpha\in\Gamma(\gamma)$, is of the form given in \eqref{eq:propeigenspace}, with $\beta$ replaced by $\gamma.$ In particular, $C_{J|\mathcal G_\gamma}$ has a unique eigenvalue, $\lambda^\gamma,$ and the associated eigenspace is spanned by $z_1^{\gamma_1}\cdots z_{d-1}^{\gamma_{d-1}}.$ Moreover, any function $h$ holomorphic in a neighbourhood of $0$ can be uniquely decomposed as
$$h(z)=\sum_{n\in\NNZ}\sum_{\substack{|\gamma|=n\\ \gamma\in\NNZdhat}}h_\gamma(z)$$
with absolute and uniform convergence on compact subsets‌ of a neighbourhood of $0,$ where $h_\gamma\in\mathcal G_\gamma$.
Since $g\circ J=\lambda^\beta g,$ we get, for all $\gamma\in\NNZdhat,$
$$(g\circ J)_\gamma=\lambda^\beta g_\gamma.$$
On the other hand,
$$g\circ J=\sum_{n\in\NNZ}\sum_{\substack{|\gamma|=n\\ \gamma\in\NNZdhat}}g_\gamma\circ J$$
so that for all $\gamma\in\NNZdhat,$ $g_\gamma\circ J=\lambda^\beta g_\gamma.$
This implies that $g_\gamma=0$ if $\gamma\neq\beta$ and $g=g_\beta\in\vect(z_1^{\beta_1}\cdots z_{d-1}^{\beta_{d-1}}).$ Since the linear map $f\mapsto g$ is injective, we conclude that $\dim(\ker(C_\varphi-\lambda^\beta I))\leq 1.$
\end{proof}

We now consider $T\in\{C_\varphi\}'$ and observe that the previous proposition implies that
$T(\mathcal P_\beta)\subset \mathcal P_\beta$ for all $\beta\in\NNZdhat.$
Since the matrix of $C_{\varphi|\mathcal P_\beta}$ in the basis described above is given by
\eqref{eq:propeigenspace}, one may find a basis $\mathcal B_\beta$ of $\mathcal P_\beta$ such that
the matrix of $C_{\varphi|\mathcal P_\beta}$ with respect to $\mathcal B_\beta$ is the Jordan block
$$J(\lambda^\beta,m(\beta)+1)=
\begin{pmatrix}
\lambda^{\beta}&1&\dots&0\\
0&\ddots&\ddots&\vdots\\
\vdots&\ddots&\ddots&1\\
0&\dots&\dots&\lambda^{\beta}
\end{pmatrix}.$$
The commutant of a Jordan block is well-known (for example see \cite{GLR}),
and this implies that there exist $c_{\beta,0}, \dots,c_{\beta,m(\beta)}\in\CC$ such that {the matrix of $T_{|\mathcal P_\beta}$ with respect to the basis $\mathcal B_\beta$ is}
$$\textrm{Mat}({T}_{|\mathcal{P}_{\beta}},\mathcal B_{\beta})=\begin{pmatrix}
c_{\beta,0}&c_{\beta,1}&\dots&c_{\beta,m(\beta)}\\
0&\ddots&\ddots&\vdots\\
\vdots&\ddots&\ddots&c_{\beta,1}\\
0&\dots&0&c_{\beta,0}
\end{pmatrix}.$$
This is one of the main tool to prove our next proposition.

\begin{proposition}\label{prop:approxim}
Under the assumptions of Theorem \ref{thm:sufficiency},  there exists $\delta>0$ such that, for all $T\in\{C_\varphi\}',$ for all $\veps>0,$ for all $w\in\delta\overline{\DD},$
there exists a polynomial $P\in\CC[X]$ satisfying
$$\|P(C_\varphi)-TC_{\varphi_w}\|<\veps.$$
\end{proposition}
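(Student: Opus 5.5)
The plan is to decompose everything along the generalized eigenspaces $\mathcal P_\beta$ and then use interpolation. By (SK2), every $f\in H$ can be expanded locally as $f=\sum_\alpha a_\alpha L^\alpha$ with $\|a_\alpha L^\alpha\|\le CM^{|\alpha|}\|f\|$. Grouping terms according to $\beta$ gives $f=\sum_{\beta\in\NNZdhat}f_\beta$ with $f_\beta\in\mathcal P_\beta$ and $\|f_\beta\|\le C(|\beta|+1)M^{|\beta|}\|f\|$.

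Since $C_{\varphi_w}$ acts on $\mathcal P_\beta$ as multiplication by $w^{|\beta|}$, we expect
$$C_{\varphi_w}f=\sum_\beta w^{|\beta|}f_\beta,$$
with norm convergence for $|w|<\delta$ once $\delta M<1$. I would justify this identity as follows. First check it on $\mathcal P_\beta$, where it is trivial. Then use (SK1) to control point evaluations near $\sigma^{-1}(0)$, which shows that the norm-convergent series and $C_{\varphi_w}f$ agree on a neighbourhood $\mathcal V$, hence everywhere. Applying $T$, which preserves each $\mathcal P_\beta$ by Proposition \ref{prop:eigenspace}, gives $TC_{\varphi_w}f=\sum_\beta w^{|\beta|}Tf_\beta$. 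This series also converges, but it requires a uniform bound $\|T_{|\mathcal P_\beta}\|\le \|T\|$, which is immediate.

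Next I approximate by $P(C_\varphi)$. On $\mathcal P_\beta$, $P(C_\varphi)$ has, in the basis $\mathcal B_\beta$, the upper triangular Toeplitz matrix with entries $P^{(k)}(\lambda^\beta)/k!$. The target $TC_{\varphi_w}$ has entries $w^{|\beta|}c_{\beta,k}$. So I want
$$P^{(k)}(\lambda^\beta)\approx k!\,w^{|\beta|}c_{\beta,k}\quad\text{for } k\le m(\beta)\le 1,$$
for $\beta$ in a finite set $|\beta|\le N$. For the tail $|\beta|>N$, I want $P$ small on a small disc in the sense of Lemma \ref{inter-lem}. The points $z_\beta=\lambda^\beta$ are pairwise distinct by independence and lie in $\DD$. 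By (SK4) and the spectral radius being less than $1$, $|\lambda^\beta|\le Ca^{|\beta|}$, so they accumulate only at $0$. I choose $r$ with only finitely many points outside $r\overline{\DD}$, namely those with $|\beta|\le N$ roughly.

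Lemma \ref{inter-lem} then gives $P$ with the interpolation conditions up to $\veps'$ and $|P(z)|\le\veps'|z|^{M'}$ on $|z|\le r$. The error on the head is then at most $\sum_{|\beta|\le N}\veps'\cdot(\text{conditioning of }\mathcal B_\beta)\cdot\|f_\beta\|$, which is small for tiny $\veps'$. Here the basis-change norms are controlled via (SK3), or simply because there are finitely many $\beta$.

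The main obstacle is the tail: bounding $\|P(C_\varphi)_{|\mathcal P_\beta}\|$ uniformly. On $\mathcal P_\beta$ we have $P(C_\varphi)=P(\lambda^\beta)I+P'(\lambda^\beta)N_\beta$, where $N_\beta$ is nilpotent. By Cauchy's estimates on the disc $|z|\le r$, both $|P(\lambda^\beta)|$ and $|P'(\lambda^\beta)|$ are at most $C\veps' |\lambda^\beta|^{M'-1}$ roughly, for $|\lambda^\beta|\le r/2$. This is at most $C\veps'(Ca^{|\beta|})^{M'-1}$. The norms of $N_\beta$ and of the basis change must be controlled by $CM^{|\beta|}$. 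I would get this by transporting through $C_\sigma$, $C_{\sigma^{-1}}$ and (SK3) to $\mathcal G_n$, where the nilpotent part is explicit. Combined with $\|f_\beta\|\le CM^{|\beta|}\|f\|$, choosing $M'$ large makes $a^{M'-1}M^2<1/2$, so $\sum_{|\beta|>N}$ is $\le C\veps'\|f\|$. The tail of $TC_{\varphi_w}$ is bounded by $\|T\|\sum_{|\beta|>N}(\delta M)^{|\beta|}(|\beta|+1)\|f\|$, which is small for $N$ large given $\delta<1/M$. Note that $\delta$ depends only on $M$, not on $T$. One delicate point is choosing $r$ and $N$ consistently with the radial gap assumed in Lemma \ref{inter-lem}. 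I would handle this by ordering the $\lambda^\beta$ by modulus and picking $r$ strictly between consecutive moduli.
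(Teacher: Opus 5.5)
There is a genuine gap in the tail estimate for $P(C_\varphi)$, which you rightly identify as the main obstacle. Your overall architecture matches the paper's: decompose $f=\sum_\beta f_\beta$, interpolate at the finitely many $\lambda^\beta$ of large modulus, make $P$ vanish to high order near $0$, and use $\delta M<1$ for the tail of $TC_{\varphi_w}$.

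The tail argument rests on the claim $m(\beta)\le 1$, i.e.\ $P(C_\varphi)_{|\mathcal P_\beta}=P(\lambda^\beta)I+P'(\lambda^\beta)N_\beta$. This is false. When $\hat d=d-1$ one has $m(\beta)=\beta_{d-1}$: the $2\times 2$ Jordan block of $A$ induces on $\mathcal P_\beta$ a single Jordan block of size $\beta_{d-1}+1$ (see \eqref{eq:propeigenspace} and Proposition \ref{prop:eigenspace}). Hence $P(C_\varphi)_{|\mathcal P_\beta}=\sum_{k=0}^{\beta_{d-1}}\frac{P^{(k)}(\lambda^\beta)}{k!}N_\beta^k$, with $k$ unbounded as $|\beta|\to\infty$.

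For the head this is harmless, since Lemma \ref{inter-lem} allows any number of derivatives. For the tail it breaks your estimate in three ways:
\begin{enumerate}
\item The pointwise Cauchy bound $|P^{(k)}(\lambda^\beta)|/k!\lesssim\veps'|\lambda^\beta|^{M'-k}$ blows up for $k>M'$ as $\lambda^\beta\to0$.
\item Bounding $\|N_\beta^k\|$ by $\|N_\beta\|^k$, with $\|N_\beta\|$ obtained by transporting through (SK3), costs a factor $M^{2|\beta|}$ per power. No fixed vanishing order $M'$ can absorb that.
\item On $\mathcal G_n$ the matrix of the nilpotent part is explicit, but its norm is not. In an abstract strong Koenigs system, the only quantitative information about the norm of $H$ on $\mathcal G_n$ comes from (SK3)--(SK4).
\end{enumerate}
When $\hat d=d$ all blocks have size $1$, and your argument does go through, even without (SK3)--(SK4).

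The missing idea is to estimate $P(C_\varphi)$ on all of $\mathcal F_{|\beta|}$ at once, rather than eigenvalue by eigenvalue, transporting only once. Write $P(C_\varphi)f_\beta=C_\sigma P(C_A)C_{\sigma^{-1}}f_\beta$ with $C_{\sigma^{-1}}f_\beta\in\mathcal G_{|\beta|}$. By (SK4), $S=C_{A|\mathcal G_n}/(C_1a^n)$ is a contraction. Von Neumann's inequality then gives $\|P(C_A)_{|\mathcal G_n}\|\le\sup_{|z|=C_1a^n}|P(z)|\le\eta(C_1a^n)^N$, as soon as $C_1a^n$ lies in the disc where $|P(z)|\le\eta|z|^N$. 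Combined with (SK3), this yields $\|P(C_\varphi)f_\beta\|\le C_1^{N+3}(M_1^3a^N)^{|\beta|}\|f\|$. This is summable once $M_1^3a^N<1$, whatever the Jordan structure.

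Because this cutoff is governed by $C_1a^{|\beta|}$ rather than by $|\lambda^\beta|$, the paper also needs the intermediate finite set $K_r\setminus I_r$. On that set $|\lambda^\beta|\le\rho<(r+\rho)/2$, so Cauchy's estimates make $P$ and its derivatives small at those points.
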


\begin{proof}
First choose $C_1,M_1>0$ and $a\in(0,1)$ such that \eqref{eq:sks4}, \eqref{eq:sks5}, \eqref{eq:sks6} and \eqref{eq:sks7} hold.
 Let $f\in H.$ By (SK2), we can write for $z\in\mathcal V,$
$$f(z)=\sum_{\alpha\in\NNZ^d}a_\alpha L^\alpha(z).$$
For $\beta\in\NNZdhat,$ set $f_\beta=\sum_{\alpha\in\Gamma(\beta)}a_\alpha L^\alpha$. Since $\textrm{card}(\Gamma(\beta))\leq |\beta|,$ we may assume that
$\|f_\beta\|\leq C_1 M_1^{|\beta|}\|f\|,$ enlarging $C_1$ and $M_1$ if necessary.
Choose $\delta>0$ such that $\delta M_1<1$ and $N\in\NN$ such that $M_1^3 a^N<1.$
Let $r,\eta\in(0,1)$ be fixed later.  We set
\begin{align*}
I_r&=\{\beta\in\NNZdhat:\ |\lambda^\beta|\geq r\}\\
J_r&=\{\beta\in\NNZdhat:\ |\lambda^\beta|<r\}.
\end{align*}
Since the spectral radius of $A$ is less than $1$, $I_r$ is finite and there exists $\rho\in(0,r)$ such that
$$J_r=\{\beta\in\NNZdhat:\ |\lambda^\beta|\leq \rho\}.$$
Set
$$K_r=\left\{\beta\in\NNZdhat:\ C_1 a^{|\beta|}\geq \frac{r+\rho}2\right\},$$
which is also finite and contains $I_r,$ enlarging $a\in(0,1)$ if necessary.
Let $w\in\Omega$. By Lemma \ref{inter-lem}, there exists a polynomial $P\in\CC[X]$ such that
$$\left|
\frac{P^{(k)}(\lambda^\beta)}{k!}-c_{\beta,k}w^{|\beta|}\right|\leq \eta$$
for all $\beta\in I_r$ and all $k=0,\dots,m(\beta)$, and such that
$$|P(z)|\leq\eta |z|^N$$
for $|z|\leq (r+\rho)/2.$ We finally set
$$
\begin{array}{rclcrcl}
F_r&=&\sum_{\beta\in I_r}f_\beta&&G_r&=&f-F_r\\
\widetilde{F_r}&=&\sum_{\beta\in K_r}f_\beta&&\widetilde{G_r}&=&f-\widetilde{F_r}.
\end{array}$$

The proof of Proposition \ref{prop:approxim} will need the following four   facts.

\medskip

\noindent{\bf Fact 1.} There exists $r_1>0$ such that for all $r\in(0,r_1]$,$w\in\delta\overline{\DD}$ and $f\in H,$
$$\|TC_{\varphi_w}(G_r)\|\leq\veps \|f\|.$$

\begin{proof}[Proof of Fact 1]
 For $z\in\mathcal V$, we may write
 $$G_r(z)=\sum_{\beta\in J_r}f_\beta(z).$$
 Since for all $\alpha\in\NNZ^d$ and  $w\in\Omega,$
 $$C_{\varphi_w}(L^\alpha)=(x^*)^\alpha\circ\sigma\circ\sigma^{-1}(w\sigma)=w^{|\alpha|}L^\alpha,$$
 we get
 $$C_{\varphi_w}(G_r)=\sum_{\beta\in J_r}w^{|\beta|} f_\beta(z).$$
 Now,
 $$\|w^{|\beta|}f_\beta\|\leq C_1 (\delta M_1)^{|\beta|}\|f\|.$$
 Since $\delta M_1<1,$ the series $\sum_{\beta\in J_r} w^{|\beta|}f_\beta$ is convergent in $H$ and we get
 $$\|TC_{\varphi_w}(G_r)\|\leq C_1\|T\|\sum_{\beta\in J_r}(\delta M_1)^{|\beta|}\|f\|.$$
 Let $N_r=\inf\{|\beta|:\ \beta\in J_r\}.$ Then $(N_r)\to\infty$ as $r\to 0$. So
 \begin{align*}
 \|TC_{\varphi_w}(G_r)\|&\leq C_1\|T\|
 \sum_{n\geq N_r}\sum_{\begin{subarray}{c}
 |\beta|=n\\
 \beta\in\NNZdhat
 \end{subarray}}(\delta M_1)^n \|f\|\\
 &\leq C_1\|T\|\sum_{n\geq N_r}\binom{n+\hat d-1}{\hat d}(\delta M_1)^n \|f\|,
 \end{align*}
 and the last term is less than $\veps\|f\|$ for sufficiently small $r$.
\end{proof}

\medskip

\noindent{\bf Fact 2.} There exists $r_2\in(0,1)$ such that for all $r\in(0,r_2]$ and $f \in H,$
$$\left\|P(C_{\varphi})\left(\widetilde{G_r}\right)\right\|\leq\veps\|f\|.$$
\begin{proof}[Proof of Fact 2]
Let $\beta\notin K_r$ and $f\in H.$ We observe that for all $z\in\mathcal V,$
$$P(C_\varphi)(f_\beta)(z)=C_\sigma\circ P(C_A)\circ C_{\sigma^{-1}}(f_\beta)(z),$$
where $C_{\sigma^{-1}}(f_\beta)\in\mathcal G_{|\beta|}.$ Set
$$S=\frac{C_{A|\mathcal G_{|\beta|}}}{C_1 a^{|\beta|}}.$$
Then $S$ is a contraction on $\mathcal G_{|\beta|}.$ Define $R(z)=P(C_1 a^{|\beta|} z).$ By von Neumann's inequality,
\begin{align*}
\|P(C_A)_{|\mathcal G_{|\beta|}\to \mathcal G_{|\beta|}}\|&=\|R(S)\| \\
&\leq \sup_{|z|=1} |P(C_1 a^{|\beta|}z)|\\
&\leq C_1^N a^{N|\beta|}
\end{align*}
since $C_1 a^{|\beta|}<(r+\rho)/2$ for $\beta\notin K_r.$
By \eqref{eq:sks5} and \eqref{eq:sks6} and since $\mathcal G_{|\beta|}$ is $P(C_A)-$invariant, this yields
\begin{align*}
\|P(C_\varphi)(f_\beta)\|&\leq C_1^2 M_1^{2|\beta|}C_1^N a^{N|\beta|} \|f_\beta\|\\
 &\leq C_1^{N+3}(M_1^3a^N)^{|\beta|}\|f\|.
\end{align*}
The proof now concludes as in Fact 1.
\end{proof}

Fix  $r=\min(r_1,r_2)$ and we show how to adjust $\eta$ accordingly.

\medskip
\noindent{\bf Fact 3.} There exists $\eta_1\in(0,1)$ such that for all $\eta\in(0,\eta_1]$,  $f \in H$
and $w\in\delta\overline{\DD},$
$$\left\|\big(P(C_{\varphi})-TC_{\varphi_w}\big)(F_r)\right\|\leq\veps\|f\|.$$
\begin{proof}[Proof of Fact 3]
Since $r$ is fixed, there exists a constant $C>0$ (depending on $r$) such that $\|F_r\|\leq C\|f\|$.
 $F_r$ belongs to the finite-dimensional space $H_0=\bigoplus_{\beta\in I_r} \mathcal P_{\beta}.$ A basis of $H_0$ is given by $\bigcup_{\beta\in I_r}\mathcal B_\beta.$
The matrix of $P(C_\varphi)_{|H_0}$ with respect to this basis is block diagonal,  each block is equal to
\begin{equation}
\label{eq:block}
\begin{pmatrix}
P(\lambda^{\beta})&\displaystyle \frac{P'(\lambda^{\beta})}{1!}&\dots&\displaystyle \frac{P^{(m(\beta))}(\lambda^{\beta})}{m(\beta)!}\\
0&\ddots&\ddots&\vdots\\
\vdots&\ddots&\ddots&\displaystyle \frac{P'(\lambda^{\beta})}{1!}\\[0.2cm]
0&\dots&0&P(\lambda^{\beta})
\end{pmatrix}
\end{equation}
Indeed, we write $J(\lambda^\beta,m(\beta)+1)=\lambda^\beta I+N,$  with $N^{m(\beta)+1}=0,$ and observe that
$$P(J(\lambda^\beta,m(\beta)+1))=\sum_{k=0}^{m(\beta)}\frac{P^{(k)}(\lambda^k)}{k!}N^k.$$
The matrix of $TC_{\varphi_w}$ with respect to the same basis is also block diagonal,  each block equals
$$w^{|\beta|}\begin{pmatrix}
c_{\beta,0}&c_{\beta,1}&\dots&c_{\beta,m(\beta)}\\
0&\ddots&\ddots&\vdots\\
\vdots&\ddots&\ddots&c_{\beta,1}\\
0&\dots&0&c_{\beta,0}
\end{pmatrix}.$$
Our choice of $P$ ensures that the coefficients of the matrix of $P(C_\varphi)_{|H_0}$ can be made arbitrarily close to those of $TC_{\varphi_w|H_0}$, which proves the result.
\end{proof}

\medskip
\noindent{\bf Fact 4.} There exists $\eta_2\in(0,1)$ such that for all $\eta\in(0,\eta_2]$ and $f \in H,$
$$\left\|P(C_{\varphi})(\widetilde{F_r}-F_r)\right\|\leq\veps\|f\|.$$
\begin{proof}[Proof of Fact 4]
We write $\widetilde{F_r}-F_r=\sum_{\beta\in K_r\backslash I_r}f_\beta.$ Again, there exists $C>0$ such that, for all $\beta\in K_r\backslash I_r,$ $\|f_\beta\|\leq C\|f\|$. The matrix of $P(C_\varphi)_{|H_1}$, where $H_1=\bigoplus_{\beta\in K_r\backslash I_r}\mathcal P_\beta$ is again block diagonal, with each block given by \eqref{eq:block}. Since $|\lambda^{\beta}|\leq\rho<(r+\rho)/2$ for all $\beta\in K_r\backslash I_r,$ the coefficients of this matrix can be made arbitrarily close to $0$ by our choice of $P,$ allowing $\eta>0$ to be small enough, which proves the result.
\end{proof}

We now easily finish the proof of Proposition \ref{prop:approxim}. Let $\eta=\min(\eta_1,\eta_2)$
and $f\in H.$ For $w\in\delta\overline{\DD},$
\begin{align*}
\|P(C_\varphi)(f)-TC_{\varphi_w}(f)\|&\leq \|P(C_\varphi)(F_r)-TC_{\varphi_w}(F_r)\|+\|TC_{\varphi_w}(G_r)\|\\
&\quad\quad +\|P(C_\varphi)(\widetilde F_r-F_r)\|+\|P(C_\varphi)(\widetilde{G_r})\|\\
&\leq 4\veps\|f\|.
\end{align*}
\end{proof}

 We are now ready for the proof of Theorem \ref{thm:sufficiency}.
\begin{proof}[Proof of Theorem \ref{thm:sufficiency}]
Let $\Lambda$ be a linear functional on $H$ that is continuous for the weak-star topology and vanishes on $\{P(C_\varphi):\ P\in\CC[X]\}$. Let $T\in\{C_\varphi\}'$. We need to show that $\Lambda(T)=0.$ Define $F$ on $\Omega$ by $F(w)=\Lambda(TC_{\varphi_w})$. Arguing as in \cite[Lemma 7.6]{BY}, we may assume without loss of generality that
$\Lambda(T)=\langle Tf,g\rangle$ for some $f,g\in\fcd$. So that
\begin{equation*}
  F(w)=\langle TC_{\varphi_w}f,g\rangle=\langle C_{\varphi_w}f,T^* g\rangle.
\end{equation*} Then we have that $F$ is analytic on $\Omega.$
By Proposition \ref{prop:approxim}, $F=0$ on $\delta\mathbb D,$ hence  $F=0$ on $\Omega$ by the analyticity of $F.$ To conclude the proof, it suffices to prove that $C_{\varphi_w}$ converges weakly to $I$ in the weak operator topology
as $w\to 1$, $w\in(0,1)$.
By the uniform boundedness of $(C_{\varphi_w})_{w\in (0,1)}$, it suffices to show that $\langle C_{\varphi_w}u,v\rangle\to \langle u,v\rangle$ for all $u\in H$ and all $v$ in a total subset of $H$, e.g. for all reproducing kernels $k_z$ with $z\in\CC^d$. This is easy:
$$\langle C_{\varphi_w}u,K_z\rangle=u(\varphi_w(z))\xrightarrow{w\to 1^-} u(z)=\langle u,k_z\rangle.$$
This proves the desired result.
\end{proof}

\section{Application to composition operators on Fock space}\label{sec:fock}
\subsection{Framework}

Let $d\geq 1$, let us endow $\mathbb{C}^{d}$ with the standard inner product $\langle z,w\rangle=\sum_{j=1}^{d}z_{j} \overline{w_{j}}$ and let us denote $|z|=\sqrt{\langle z,{z}\rangle}$.
The classical Fock space on $\mathbb{C}^{d}$ is the separable Hilbert space of holomorphic functions on $\CC^d$ defined by
\begin{equation*}
\fcd=\left\{f\in H(\mathbb{C}^{d}):  \|f\|^{2}:=\frac{1}{(2\pi)^{d}}\int_{\mathbb{C}^{d}}|f(z)|^{2}e^{-{|z|^{2}}/{2}}dA(z)<\infty\right\}.
\end{equation*}
The space $\fcd$ is a reproducing kernel Hilbert space. It is well known that the reproducing kernel function at $w\in\CC^d$ is given by
\begin{equation*}
  k_{w}:z\in \mathbb{C}^{d}\mapsto k_{w}(z)=\exp\left(\frac{\langle z,w\rangle}{2}\right),
\end{equation*}
with norm $\|k_{w}\|=\exp\left(\frac{|w|^{2}}{4}\right)$.
Moreover, the set of polynomials $\{z^{\alpha}:\alpha\in \mathbb{Z}_+^{d}\}$ is an orthogonal basis of $\fcd$ with
\begin{equation}\label{eq:normpowerfock}
  \|z^{\alpha}\|^{2}=2^{|\alpha|}\alpha!
\end{equation}
for all $\alpha\in\ZZ_+^d.$
For more details about Fock spaces, we refer to Zhu's monograph \cite{Zh}.

\smallskip

A holomorphic map $\varphi:\mathbb{C}^{d}\rightarrow\mathbb{C}^{d}$
induces a composition operator $C_\varphi$ defined by $C_{\varphi}f=f\circ\varphi$ for any  $f\in \fcd$.
Carswell et al \cite{CMS} have characterized when $C_{\varphi}$ is a bounded or compact composition operator on $\fcd$, as follows.

\begin{theorem}\label{bound-com}
Let $\varphi:\mathbb{C}^{d}\rightarrow\mathbb{C}^{d}$ be holomorphic.
\begin{itemize}
  \item[(a)] $C_{\varphi}$ is bounded on $\fcd$ if and only if $\varphi(z)=Az+B$, where $A\in \mathcal M_d(\mathbb C)$, $\|A\|\leq1$, $B\in \mathbb{C}^{d}$,
and $\langle A\zeta,B\rangle=0$ for all $\zeta\in\mathbb{C}^{d}$ with $|A\zeta|=|\zeta|$.
  \item[(b)] $C_{\varphi}$ is compact on $\fcd$ if and only if $\varphi(z)=Az+B$, where $A\in \mathcal M_d(\mathbb C)$ and $\|A\|<1$, $B\in \mathbb{C}^{d}$.
\end{itemize}
\end{theorem}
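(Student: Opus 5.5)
The statement is the characterization of Carswell, MacCluer and Schuster \cite{CMS}. I would reprove it using the reproducing kernels and the identity $C_\varphi^*k_w=k_{\varphi(w)}$. The plan is to get the affine form and the conditions on $A,B$ from kernel norm bounds, then prove sufficiency by reducing to one complex variable through a singular value decomposition of $A$.

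\emph{Necessity in (a).} If $C_\varphi$ is bounded, then $\|k_{\varphi(w)}\|\le\|C_\varphi\|\,\|k_w\|$. Since $\|k_w\|=\exp(|w|^2/4)$, this gives
$$|\varphi(w)|^2\le |w|^2+c,\qquad c=4\log\|C_\varphi\|,$$
for all $w\in\CC^d$. So each coordinate of $\varphi$ is an entire function of linear growth. By Cauchy estimates (Liouville), each coordinate is affine, so $\varphi(z)=Az+B$.
\begin{itemize}
\item Put $w=t\zeta$ and let $t\to\infty$: this gives $|A\zeta|\le|\zeta|$, hence $\|A\|\le 1$.
\item Take $\zeta$ with $|A\zeta|=|\zeta|$ and put $w=te^{i\theta}\zeta$. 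Expanding $|Aw+B|^2$, the quadratic terms cancel and we are left with $2t\,\mathrm{Re}\bigl(e^{i\theta}\langle A\zeta,B\rangle\bigr)+|B|^2\le c$ for all $t>0$ and all $\theta$. This forces $\langle A\zeta,B\rangle=0$.
\end{itemize}

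\emph{Sufficiency in (a).} Write $A=UDV$ with $U,V$ unitary and $D=\mathrm{diag}(d_1,\dots,d_d)$, $0\le d_j\le1$. Then $\varphi=U\circ\psi\circ V$ with $\psi(z)=Dz+B'$ and $B'=U^*B$, so $C_\varphi=C_V C_\psi C_U$. The operators $C_U$ and $C_V$ are unitary on $\fcd$, because the Gaussian measure is rotation invariant.

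Next I translate the orthogonality condition. The vectors $\zeta$ with $|A\zeta|=|\zeta|$ are exactly those for which $V\zeta$ lies in $\vect(e_j:\ d_j=1)$. For such $\zeta$, $\langle A\zeta,B\rangle=\langle DV\zeta,B'\rangle$, so the condition says precisely that $B'_j=0$ whenever $d_j=1$.

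Hence $\psi$ is a product of one-variable maps:
\begin{itemize}
\item $z_j\mapsto z_j$ when $d_j=1$;
\item $z_j\mapsto d_jz_j+b_j$ with $d_j<1$ otherwise.
\end{itemize}
Since $\fcd$ is the Hilbert tensor product of the one-variable Fock spaces, $C_\psi$ is the tensor product of the corresponding one-variable operators. It therefore suffices to show that $f\mapsto f(az+b)$ is bounded on $\mathcal F(\CC)$ when $|a|<1$.

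\emph{The one-variable estimate.} This is the step I expect to be the main technical point. For $a\neq0$, the change of variables $u=az+b$ gives
$$\|f\circ\psi\|^2=\frac{1}{2\pi|a|^2}\int_{\CC}|f(u)|^2e^{-|u-b|^2/(2|a|^2)}\,dA(u).$$
Because $1/|a|^2>1$, there is a constant $C$ with $|u-b|^2/|a|^2\ge|u|^2-C$. The integral is therefore at most a constant times $\|f\|^2$. In fact the ratio of the two weights tends to $0$ at infinity, so the pull-back measure is a vanishing Fock–Carleson measure and the operator is compact. The case $a=0$ is trivial, since the operator then has rank one.

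\emph{Part (b).} If $\|A\|<1$, every factor in the tensor decomposition is compact and there are finitely many factors, so $C_\psi$ is compact and hence so is $C_\varphi$. Conversely, suppose $C_\varphi$ is compact. The normalized kernels $k_w/\|k_w\|$ tend weakly to $0$ as $|w|\to\infty$, so $\|k_{\varphi(w)}\|/\|k_w\|\to0$, which means $|\varphi(w)|^2-|w|^2\to-\infty$. Compactness implies boundedness, so by (a) we have $\varphi=Az+B$ with the orthogonality condition. If there were $\zeta\neq0$ with $|A\zeta|=|\zeta|$, then along $w=t\zeta$ the orthogonality condition gives $|\varphi(w)|^2-|w|^2=|B|^2$, a constant, which is a contradiction. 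Hence $\|A\|<1$.
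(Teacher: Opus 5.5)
Your proof is correct. The paper gives no argument for this statement; it quotes it, together with the norm formula \eqref{norm}, from \cite{CMS}. So your write-up is a self-contained replacement rather than a variant of something in the text.

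Necessity in both parts follows cleanly from $C_\varphi^*k_w=k_{\varphi(w)}$ and $\|k_w\|=e^{|w|^2/4}$. For sufficiency, the singular value decomposition and the unitary invariance of the Gaussian weight turn the orthogonality condition into $B'_j=0$ whenever $d_j=1$. The splitting $\fcd=\mathcal F(\CC)^{\otimes d}$ then reduces everything to $f\mapsto f(az+b)$ on $\mathcal F(\CC)$. You should make one routine point explicit: $C_\psi=\bigotimes_j C_{\psi_j}$, because both agree on monomials, polynomials are dense, and norm convergence implies pointwise convergence.

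What your route does not give is the exact value \eqref{norm}. The paper uses that formula in the proof of Theorem \ref{thm:fock} to bound $\|C_{\varphi_w}\|$ for $w\in[0,1)$. The Jacobian in your change of variables only yields $\|C_{az+b}\|\le|a|^{-1}\exp\bigl(|b|^2/(4(1-|a|^2))\bigr)$, which degenerates as $a\to0$. Your kernel estimate already shows that $\exp\bigl(|b|^2/(4(1-|a|^2))\bigr)$ is a lower bound for this norm. If you also prove it as an upper bound, your factorization recovers \eqref{norm} in every dimension. This works because unitary factors do not change norms, norms multiply under tensor products, and a direct computation gives $|z_0|^2-|Az_0|^2+|B|^2=\sum_{d_j<1}|B'_j|^2/(1-d_j^2)$.
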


For convenience, let $\mathcal{S}_{c}(\mathbb{C}^{d})$ denote the above self-maps of $\mathbb{C}^{d}$ inducing bounded composition operators on $\fcd$.
Carswell et al. \cite{CMS} have also computed the norm of $C_\varphi$:
\begin{equation}\label{norm}
  \|C_{\varphi}\|=\exp\left(\frac{|z_0|^2 -|Az_0|^2+|B|^2}{4}\right)
\end{equation}
where $z_{0}$ is any solution of $(I_{d}-A^{\ast}A)z=A^{\ast}B$.
Here and below, $A^{\ast}$ denotes the conjugate transpose of $A$.

\subsection{Statement and proof}
We characterize all compact composition operators on $\fcd$ with the minimal commutant property, and even slightly more.

\begin{theorem}\label{thm:fock}
Let $\varphi(z)=Az+B$ induce a bounded composition operator on $\fcd$ with $\rho(A)<1.$ Then $C_\varphi$ has the minimal commutant property if and only if
\begin{itemize}
  \item[$\bullet$] the canonical Jordan form of $A$ admits at most one Jordan block of size greater than $1$, and the size of this Jordan block, if it exists,  is exactly $2$;
  \item[$\bullet$] if $\lambda_1,\dots,\lambda_{\hat d}$ are the eigenvalues of $A$, listed according to their geometric multiplicities, then they are independent.
\end{itemize}
\end{theorem}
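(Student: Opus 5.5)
The plan is to place $(\fcd,\varphi)$ inside the framework of Section \ref{com-nec} and Section \ref{com-suf}: necessity will come from Theorem \ref{nece} and sufficiency from Theorem \ref{thm:sufficiency}.

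\textbf{Koenigs system and necessity.} Since $\rho(A)<1$, the matrix $I-A$ is invertible, so $\varphi$ has the fixed point $p=(I-A)^{-1}B$. Set $\sigma(z)=z-p$. Then $\sigma\circ\varphi(z)=Az+B-p=A(z-p)=A\sigma(z)$, because $B=p-Ap$. The map $\sigma$ is univalent, $0\in\sigma(\CC^d)$, the functions $\sigma^\alpha$ are polynomials and hence lie in $\fcd$, and $C_\varphi$ is bounded by hypothesis. So $(\fcd,\varphi)$ is a Koenigs system with $\varphi'(0)=A$, and Theorem \ref{nece} gives the necessity of both conditions.

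\textbf{Reduction to $\|A\|<1$.} For sufficiency, first replace $\varphi$ by an iterate $\varphi_k=\varphi\circ\cdots\circ\varphi$, which is affine with linear part $A^k$, chosen so that $\|A^k\|<1$. This is possible since $\rho(A)<1$. By Lemma \ref{lem:powercommutant}, it suffices that $C_{\varphi_k}=C_\varphi^k$ has a minimal commutant. The hypotheses transfer to $A^k$:
\begin{itemize}
\item[$\bullet$] Independence forces all $\lambda_i\neq 0$. If $(\lambda^k)^\alpha=(\lambda^k)^\beta$, then $\lambda^{k\alpha}=\lambda^{k\beta}$, so $k\alpha=k\beta$ and $\alpha=\beta$. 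Hence $\lambda_1^k,\dots,\lambda_{\hat d}^k$ are independent.
\item[$\bullet$] Since the eigenvalues are nonzero, raising a Jordan block to the $k$-th power preserves its size. Since the $\lambda_i^k$ are distinct, no blocks merge.
\end{itemize}
Also $\varphi_k$ has the same fixed point $p$, so we may keep $\sigma$. From now on we assume $\|A\|<1$. Then (SK4) follows from Lemma \ref{lem:suf2}: for $1<\lambda<1/\|A\|$, the map $z\mapsto \lambda Az$ induces a compact, hence bounded, operator by Theorem \ref{bound-com}(b).

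\textbf{(SK5) and (SK1).} Take $\Omega=\DD$, so that $\varphi_w(z)=p+w(z-p)=wz+(1-w)p$. For $|w|<1$, Theorem \ref{bound-com}(b) gives boundedness of $C_{\varphi_w}$. Formula \eqref{norm} with $A=wI$ and $B=(1-w)p$ gives $z_0=\frac{\bar w}{1+\bar w}\,p$ in general, which reduces to $z_0=\frac{w}{1+w}p$ for real $w$. The resulting exponent stays bounded for $w$ in $[0,1)$ and locally uniformly on $\DD$; this yields \eqref{eq:sks2}. For \eqref{eq:sks3}, the map $w\mapsto\langle C_{\varphi_w}f,k_z\rangle=f(wz+(1-w)p)$ is holomorphic. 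Together with local boundedness of $\|C_{\varphi_w}\|$ and density of the span of the kernels, this gives weak holomorphy. (SK1) is immediate from $\|k_z\|=e^{|z|^2/4}$.

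\textbf{Main obstacle: (SK2) and (SK3).} Lemma \ref{lem:suf1} is not available, because $\|z^\alpha\|=(2^{|\alpha|}\alpha!)^{1/2}$ is not $O(M^{|\alpha|})$. I will verify these conditions directly, always measuring against $\|z^\alpha\|$ rather than against $1$.

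For (SK3), expand $(z-p)^\alpha=\sum_{\gamma\le\alpha}\binom{\alpha}{\gamma}(-p)^{\alpha-\gamma}z^\gamma$. Using $\|z^\gamma\|\le\|z^\alpha\|$ up to a factor $C^{|\alpha|}$ for $\gamma\le\alpha$, this gives $\|(z-p)^\alpha\|\le CM^{|\alpha|}\|z^\alpha\|$. By orthogonality of the $z^\alpha$ and Cauchy--Schwarz, at the cost of a polynomial factor $\binom{n+d-1}{d-1}$, this yields \eqref{eq:sks5}. For \eqref{eq:sks6}, I need lower bounds on $\|\sum_{|\alpha|=n}b_\alpha(z-p)^\alpha\|$. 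I would obtain them through the Weyl unitary $W_pf(z)=f(z-p)e^{\langle z,p\rangle/2-|p|^2/4}$, which maps $z^\alpha$ to $(z-p)^\alpha$ times an exponential factor. Controlling that factor on homogeneous pieces, again up to $M^n$, should give the bound.

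(SK2) goes the same way. The Taylor coefficients of $f$ at $p$ are the coefficients of $W_p^{-1}f$, divided by an exponential whose Taylor coefficients decay factorially. This should give $|a_\alpha|\,\|z^\alpha\|\le CM^{|\alpha|}\|f\|$. Combining with the bound on $\|(z-p)^\alpha\|$ and the change of basis argument from the proof of Lemma \ref{lem:suf1}, which only costs $M^{|\alpha|}$, gives \eqref{eq:sks4}.

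I expect these factorial-weight bookkeeping estimates to be the delicate step. Once they are done, Theorem \ref{thm:sufficiency} applies.
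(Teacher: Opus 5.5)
Your plan has the paper's structure. Necessity comes from Theorem~\ref{nece}, and you reduce to $\|A\|<1$ via Lemma~\ref{lem:powercommutant}, checking more carefully than the paper that the Jordan and independence hypotheses pass to $A^k$. You then take $\sigma(z)=z-p$, get (SK4) from Lemma~\ref{lem:suf2}, and get (SK5) with $\Omega=\DD$ through \eqref{norm}. Your density-of-kernels argument for \eqref{eq:sks3} is a valid substitute for the paper's appeal to Mattner's theorem. One slip: the solution of $(1-|w|^2)z_0=\bar w(1-w)p$ is $z_0=\frac{\bar w(1-w)}{1-|w|^2}\,p$, which equals $\frac{\bar w}{1+\bar w}\,p$ only for real $w$. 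The resulting norm is $\exp\bigl(\frac14\frac{|1-w|^2}{1-|w|^2}|p|^2\bigr)$, so your boundedness conclusions still stand.

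The genuine difference is in (SK2)--(SK3), and there the paper's tools are much lighter than the Weyl operator.

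\textbf{\eqref{eq:sks6}.} You need no lower-bound machinery. The function $\sum_{|\alpha|=n}b_\alpha(z-p)^\alpha$ is $\sum_{|\alpha|=n}b_\alpha z^\alpha$ plus terms of degree $<n$. Homogeneous parts of different degrees are orthogonal in $\fcd$, so $\|C_{\sigma^{-1}}f\|\le\|f\|$ outright. Your Weyl route can be completed, but only by hand: multiplication by $e^{\langle z,p\rangle/2}$ is unbounded on $\fcd$, so you would have to show it has norm $\le CM^n$ on polynomials of degree $\le n$. Your sketch leaves this open.

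\textbf{Taylor coefficients at $p$.} Here your Weyl argument does work. Take the Cauchy product with the coefficients $(\bar p/2)^\gamma/\gamma!$ of $e^{\langle z,p\rangle/2}$ and use $\binom{\beta}{\gamma}\le 2^{|\beta|}$; this gives $|b_\beta|\,\|z^\beta\|\le CM^{|\beta|}\|f\|$. The paper gets the same bound in one line from the bounded operator $C_{\tilde\sigma}$, $\tilde\sigma(z)=z/2+p$. Since $C_{\tilde\sigma}f=\sum_\beta b_\beta 2^{-|\beta|}z^\beta$, orthogonality gives $\|b_\beta z^\beta\|\le 2^{|\beta|}\|C_{\tilde\sigma}\|\,\|f\|$.

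\textbf{Change of basis.} This step is not as free as in Lemma~\ref{lem:suf1}. It mixes multi-indices of the same length whose weights $\|z^\beta\|=(2^n\beta!)^{1/2}$ differ. You must therefore add the Stirling comparison $\sup_{|\beta|=n}\|z^\beta\|\le M^n\inf_{|\beta|=n}\|z^\beta\|$, which the paper states explicitly. The paper pairs it with a change-of-variables bound $\|(x^*)^\alpha\|\le|\det Q|^{-1}\|Q\|^{n+d}\|z^\alpha\|$. Your alternative, expanding $L^\alpha$ in the $(z-p)^\gamma$ and using $\|(z-p)^\gamma\|\le CM^n\|z^\gamma\|$, works equally well once that comparison is included.
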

\begin{proof}
By Lemma \ref{lem:powercommutant}, we may assume without loss of generality that $\|A\|<1.$ It suffices to show that $(H, \varphi)$ is a strong Koenigs system.
Let $\xi$ be the fixed point of $\varphi$ (recall that $A-I_{d}$ is invertible) and write $\varphi(z)=A(z-\xi)+\xi.$ The map $\sigma$ is given by $\sigma(z)=z-\xi,$ and $\sigma^\alpha$ belongs to $\fcd$ for all $\alpha\in \ZZ_+^d.$ It is also clear that (SK1) hold.
Unfortunately, \eqref{eq:normpowerfock} implies that we cannot appeal to Lemma \ref{lem:suf1} to prove (SK2) and (SK3).

We begin by proving (SK3).
By the rotational invariance of the norm in $\fcd$, we may  assume that $\xi=\xi_1 e_1.$
Let $g(z)=\sum_{|\alpha|=n} a_\alpha z^{\alpha}\in \mathcal G_n.$ Then
\begin{align*}
C_\sigma(g)&=\sum_{\alpha_{1}+|\alpha'|=n} a_\alpha (z_1-\xi_{1})^{\alpha_{1}} (z'_1)^{\alpha'}\\
&=\sum_{\alpha_{1}+|\alpha'|=n} a_\alpha\sum_{j=0}^{\alpha_{1}} \binom {\alpha_{1}}j\left(-\xi_1\right)^{j}z_1^{\alpha_{1}-j} (z'_1)^{\alpha'}\\
&=\sum_{j=0}^n (-\xi_1)^{j} \sum_{\alpha_{1}+|\alpha'|=n,\alpha_{1}\geq j} \binom {\alpha_{1}}j a_\alpha z_1^{\alpha_{1}-j} (z'_1)^{\alpha'}.
\end{align*}
where $z=(z_1,z'_1)$ and $\alpha=(\alpha_1,\alpha'_1)$.
Therefore
\begin{align*}
\|C_\sigma(g)\| \leq & \sum_{j=0}^n |\xi_1|^{j} \sum_{\alpha_{1}+|\alpha'|=n,\alpha_{1}\geq j} \binom {\alpha_{1}}j |a_\alpha| \left\|z_1^{\alpha_{1}-j} (z'_1)^{\alpha'}\right\|\\
\leq & \sum_{j=0}^n |\xi_1|^{j} \binom {n+d-1}{d} ^{1/2} 2^{n}\left(\sum_{\alpha_{1}+|\alpha'|=n,\alpha_{1}\geq j} |a_\alpha|^{2} 2^{n-j}(\alpha_{1}-j)!\alpha'!\right)^{1/2}\\
\leq & M^{n}\sum_{j=0}^n \left(\sum_{\alpha_{1}+|\alpha'|=n,\alpha_{1}\geq j} |a_\alpha|^{2} 2^{n}\alpha_{1}!\alpha'!\right)^{1/2}\\
\leq & M^{n}(n+1)\|g\|\\
\leq & M^{n}\|g\|.
\end{align*}

On the other hand, let $f(z)=\sum_{|\alpha|=n}b_\alpha(z-\xi)^\alpha\in\mathcal{F}_n.$ Then
$f(z)=\sum_{|\alpha|=n} b_\alpha z^\alpha+Q(z)$ where $Q$ is a polynomial of degree (strictly) less than $n.$ Therefore,
by orthogonality,
$$\|C_{\sigma^{-1}}f\|=\left\|\sum_{|\alpha|=n}b_\alpha z^\alpha\right\|\leq\|f\|.$$

To prove (SK2), let $(x_1^*,\dots,x_d^*)$ be a basis of $(\mathbb C^d)^*$ and let $Q\in GL_d(\CC)$
such that $x^*=Qe^*$,  where $x^*=(x_1^*,\dots,x_d^*)^T$ and $e^*=(e_1^*,\dots,e_d^*)^T.$
 Let $n\geq 0$ and $\alpha\in\mathbb{Z}_+^{d}$ with $|\alpha|=n$. We claim that
\begin{equation}\label{inv-m-monomial}
\|(x^*)^{\alpha}\|\leq |\det{Q}|^{-1} \|Q\|^{n+d} \|z^{\alpha}\|
\end{equation}
Indeed, note that
\begin{equation*}
\|(x^*)^{\alpha}\|^{2}=\frac{1}{(2\pi)^{d}}\int_{\mathbb{C}^{d}}|(Qz)^{\alpha}|^{2}e^{-\frac{|z|^{2}}{2}}dA(z).
\end{equation*}
We perform the change of variable $w=Qz$. Since $Q$ is invertible,
we have $z=Q^{-1}w$ and the Jacobian of this transformation is $|\det(Q^{-1})|=|\det{Q}|^{-1}$.
Moreover, $\|Q\|^{-1}|w|\leq|z|\leq\|Q^{-1}\||w|$. Thus
\begin{align*}
\|(x^*)^{\alpha}\|^{2}
 &\leq\frac{1}{(2\pi)^{d}}|\det{Q}|^{-2}\int_{\mathbb{C}^{d}}|w^{\alpha}|^{2}e^{-\frac{\|Q\|^{-2}|w|^{2}}{2}}dA(w) \\
  &= |\det{Q}|^{-2} \frac{1}{2^{d}} \prod_{j=1}^{d}(2\|Q\|^{2})^{\alpha_{j}+1}\alpha_{j}!\\
  &= |\det{Q}|^{-2} \|Q\|^{2(n+d)} \|z^{\alpha}\|^{2},
\end{align*}
as desired. Observe also that, by Stirling's formula, there exists $M>0$ such that
$$\sup_{|\beta|=n} \|z^\beta\|\leq M^n \inf_{|\beta|=n}\|z^\beta\|$$
so that, for all $n\in\NN,$  $\alpha\in\NNZ^d$ with $|\alpha|=n,$
$$\|(x^*)^{\alpha}\|\leq CM^n \inf_{|\beta|=n}\|z^\beta\|.$$
Pick  $f$ a polynomial and rewrite it as follows
$$f=\sum_\alpha a_\alpha L^\alpha$$
(recall that $\sigma(z)=z-\xi$). We get
$$f\circ\sigma^{-1}=\sum_n \sum_{|\alpha|=n}a_\alpha (x^*)^\alpha=\sum_n \sum_{|\beta|=n}b_\beta z^\beta.$$
Arguing exactly as in the proof of Lemma \ref{lem:suf1}, we find for all $n\geq 0$ and  $\alpha\in\NNZ^d$ with $|\alpha|=n,$
$$|a_\alpha|\leq C M^n \sup_{|\beta|=n} |b_\beta|.$$
Finally, we find
\begin{align*}
\|a_\alpha L^\alpha\|&\leq |a_\alpha| \cdot \|C_\sigma((x^*)^\alpha)\|\\
&\leq CM^n \left(\sup_{|\beta|=n}|b_\beta|\right)\left(\inf_{|\beta|=n} \|z^\beta\|\right)\\
&\leq CM^n \sup_{|\beta|=n}\|b_\beta z^\beta\|.
\end{align*}
Recall that $f(z)=\sum_n \sum_{|\beta|=n} b_\beta (z-\xi)^\beta$ and consider $\tilde{\sigma}(z)=\frac z2+\xi$
which induces a bounded composition operator on $\fcd.$ Moreover
$$C_{\tilde \sigma}(f)=\sum_n \sum_{|\beta|=n} b_\beta 2^{-n} z^\beta$$
so that, for all $\beta$ with $|\beta|=n,$
$$\|b_\beta z^\beta\|\leq 2^n \|C_{\tilde \sigma}\|\cdot \|f\|.$$

Property (SK4) follows from Lemma \ref{lem:suf2} since $\|A\|<1.$
We finally prove (SK5) for $\Omega=\DD.$ The proof of \eqref{eq:sks2} is straightforward since we have an explicit formula for the norm of $C_{\varphi_w}$. Indeed, since $\varphi_w(z)=wz+(\xi-w\xi),$ by \eqref{norm} we have
$$\|C_{\varphi_w}\|=\exp\left(\frac 14\left(|\xi_0|^2-|w|^2|\xi_0|^2+|\xi-w\xi|^2\right)\right)$$
where $\xi_0$ is the solution to $(1-|w|^2)\xi_0=\bar w (1-w)\xi.$ We get that
$$\|C_{\varphi_w}\|= \exp\left(\frac 14\cdot \frac{|1-w|^2}{1-|w|^2}\cdot |\xi|^2\right).$$
For $w\in[0,1),$ this simplifies to
$$\|C_{\varphi_w}\|= \exp\left(\frac 14\cdot \frac{1-w}{1+w}\cdot |\xi|^2\right)$$
which is bounded on $[0,1)$.
It remains to prove the analyticity of $F:w\in\DD\mapsto \langle C_{\varphi_w}(f),g\rangle$ for all $f,g\in\mathcal F(\CC^d).$ Write, for each $w\in\Omega,$
$$C_{\varphi_w}(f)=\sum_{\alpha\in\NNZ^d} a_\alpha(w)z^\alpha.$$
Then the maps $w\in\DD\mapsto a_\alpha(w)$ are analytic. Indeed, for all $w\in\DD,$
$$f(z)=\sum_{\alpha\in\NNZ^d}\frac{\partial^\alpha f}{\partial z^\alpha}((1-w)\xi)\big(z-(1-w)\xi\big)^\alpha.$$
So that
$$C_{\varphi_w}f(z)=\sum_{\alpha\in\NNZ^d}\frac{\partial^\alpha f}{\partial z^\alpha}((1-w)\xi)w^{|\alpha|}z^\alpha.$$
Hence,
$$a_\alpha(w)=w^{|\alpha|} \frac{\partial^\alpha f}{\partial z^\alpha}((1-w)\xi).$$

Write  $g=\sum_{\alpha\in\NNZ^d}c_\alpha z^\alpha$ so that
$$F(w)=\sum_{\alpha\in\NNZ^d} \frac{a_\alpha(w)\overline{c_\alpha}}{2^{|\alpha|}\alpha!}.$$
Furthermore, for all $w\in\DD,$ by the Cauchy-Schwarz inequality,
\begin{align*}
\sum_{\alpha} \left| \frac{a_\alpha(w)\overline{c_\alpha}}{2^{|\alpha|}\alpha!}\right|&\leq \|C_{\varphi_w}(f)\|\cdot \| g\|    \leq  \|C_{\varphi_{w}}\|\cdot  \|f\|\cdot \|g\|\\
&=\exp\left(\frac 14\cdot \frac{|1-w|^2}{1-|w|^2}\cdot |\xi|^2\right)\|f\|\cdot\| g\|.
\end{align*}
Since the left-hand side is locally bounded, the analyticity of $F$ follows from a result of Mattner \cite{Mattner}.
\end{proof}

\begin{remark}
 The case $\|A\|=1$ remains open, even if $\varphi(z)=Az$ with $A$ diagonalizable and having at least one eigenvalue of modulus $1$ and at least one eigenvalue of modulus less than $1.$ Indeed, it is unknown whether a diagonal operator (with respect to some orthonormal basis) has a minimal commutant, when a subsequence of eigenvalues lies in the unit circle whereas another subsequence goes to $0.$ See \cite{BY} for further discussion on this problem.
 \end{remark}

 \begin{remark}
 Theorem \ref{thm:fock} should be compared with the main theorem of \cite{BT}.
 Putting them together we find that, for any $\varphi(z)=Az+B\in\mathcal S_c(\CC^d),$
 provided that the spectral radius of $A$ is less than $1,$ then $C_\varphi$ is cyclic if and only
 if it has a minimal commutant.
 \end{remark}

 \begin{remark}
 The proof of Theorem \ref{thm:fock} would be easier if we could assume that $\varphi(0)=0.$
Unfortunately we cannot do that because if $\varphi(0)\neq 0,$ there does not exist $\psi\in \mathcal S_c(\CC^d)$
 such that $\psi^{-1}\in S_c(\CC^d)$ interchanging $\varphi(0)$ and $0.$
 \end{remark}

\section{Composition operators on the Hardy spaces of the  ball and  polydisc}\label{sec:ball-polydisc}

\subsection{The Euclidean ball}

In this section, we investigate composition operators on the Hardy space $\hdbd$ on the unit Euclidean ball $\bd.$
Observe that, provided $d\geq 2,$ there are holomorphic self-maps $\varphi$ of $\bd$ which do not induce bounded composition operators on $\hdbd.$ Moreover, even on the Hardy space of the disc, the minimal commutant property is well studied only for linear fractional maps (see \cite{LLSR18}).
Therefore, it is natural to restrict ourselves to the class of linear fractional maps of $\bd,$ introduced in \cite{cmlfm}.
Since for any $z_0\in\bd,$ there exists an automorphism $U$ of $\bd$ exchanging $0$ and $z_0$, the corresponding composition operator $C_U$ is bounded on $\hdbd$, and the set of linear fractional maps (including all automorphisms) is invariant under composition, then following form Lemma \ref{sim-com}, we may and shall assume that $\varphi(0)=0.$

Hence, we consider $\varphi$ a linear fractional map of $\bd$ fixing $0.$ We may write
$$\varphi(z)=\frac{Az}{\langle z,C\rangle+1},$$
with $A\in\mathcal M_d(\CC)$ and $C\in\CC^d.$ We say that $\varphi$ is \emph{loxodromic} if
\begin{itemize}
\item Every eigenvalue of $A$ has modulus less than $1$;
\item $|P|<1$, where $P=(I_d-A^*)^{-1}C$.
\end{itemize}
It should be observed that the condition $|P|\leq 1$ is mandatory for $\varphi$ to send $\bd$ into $\bd$ (see \cite[Proof of Theorem 19]{cmlfm}). Moreover, this terminology coincides with the one-variable setting.
Recall that if $\varphi$ is a linear fractional map of $\DD,$ it is loxodromic if it is not an automorphism and if it admits one fixed point in $\DD$ and one fixed point outside $\overline\DD.$ Since $\varphi(0)=0,$ the other fixed
point is $z=-(1-A)\overline{C}^{-1}$ (now $A$ and $C$ are complex numbers), it lies outside $\overline\DD$
if and only if $|(1-\bar A)^{-1}C|<1.$

\begin{theorem}\label{thm:ball}
Let $\varphi$ be a loxodromic linear fractional self-map of $\bd$ with $\varphi(0)=0.$ Then $C_\varphi$ has the minimal commutant property on $\hdbd$ if and only if
\begin{itemize}
  \item[$\bullet$] The canonical Jordan form of $A$ admits at most one Jordan block whose size exceeds $1$, and the size of this Jordan block, if it exists,  is exactly $2$;
  \item[$\bullet$] If $\lambda_1,\dots,\lambda_{\hat d}$ are the eigenvalues of $A$, repeated by geometric multiplicity, then $\lambda_1\dots,\lambda_{\hat d}$ are independent.
\end{itemize}
\end{theorem}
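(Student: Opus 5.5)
Both directions reduce to showing that $(\hdbd,\varphi)$ is a strong Koenigs system (for sufficiency) or at least a Koenigs system (for necessity). Necessity then follows from Theorem \ref{nece}, and sufficiency from Theorem \ref{thm:sufficiency}, since the spectral radius of $A$ is less than $1$ by the loxodromic hypothesis. Lemma \ref{lem:powercommutant} lets us replace $\varphi$ by an iterate $\varphi_n=\varphi\circ\dots\circ\varphi$ whenever convenient. Iterates of linear fractional maps fixing $0$ are again linear fractional with linear part $A^n$. The independence and Jordan conditions are the same for $A$ and $A^n$ once $A$ satisfies them, since independence forbids roots of unity among the ratios. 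The passage back from $\varphi_n$ to $\varphi$ is only needed in the sufficiency direction, and there Lemma \ref{lem:powercommutant} is exactly the direction we need.

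\textbf{Koenigs map.} I would look for $\sigma$ of linear fractional type, $\sigma(z)=\frac{z}{1-\langle z,P\rangle}$ (or a variant with $P$ replaced by a suitable vector). The idea is to conjugate $\varphi$ to its linear part by sending the "other fixed point" at infinity, mimicking one variable. I would check the identity $\sigma\circ\varphi=A\sigma$ directly using $P=(I_d-A^*)^{-1}C$. Indeed,
$$\frac{\varphi(z)}{1-\langle\varphi(z),P\rangle}=\frac{Az}{1+\langle z,C\rangle-\langle Az,P\rangle},$$
and $C-A^*P=P$ gives the denominator $1-\langle z,P\rangle$. Then $\sigma$ is univalent, $\sigma(0)=0$, and since $|P|<1$ the map $\sigma$ is holomorphic on a neighbourhood of $\overline{\bd}$. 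Hence each $\sigma^\alpha$ lies in $\hdbd$ with $\|\sigma^\alpha\|\le \sup_{\bd}|\sigma|^{|\alpha|}\le (1-|P|)^{-|\alpha|}$. Together with $\|z^\alpha\|\le1$ and the standard kernel bound $\|k_z\|=(1-|z|^2)^{-d/2}$ (giving (SK1)), this gives the Koenigs property, and Lemma \ref{lem:suf1} gives (SK2) and (SK3).

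\textbf{(SK4) and (SK5).} For (SK4) I would use Lemma \ref{lem:suf2}. After passing to an iterate we may assume $\|A\|<1$, since $\rho(A)<1$. Then for $\lambda=\|A\|^{-1}>1$ the map $\lambda A$ is a linear self-map of $\overline{\bd}$, and it induces a bounded (contractive) composition operator on $\hdbd$. Here one must make sure the iterate is still loxodromic, which holds since the relation $P=(I-A^*)^{-1}C$ is intrinsic to the fixed point structure. For (SK5), $\varphi_w=\sigma^{-1}(w\sigma)$. Computing, $\sigma^{-1}(u)=\frac{u}{1+\langle u,P\rangle}$, so
$$\varphi_w(z)=\frac{wz}{1-\langle z,P\rangle+w\langle z,P\rangle},$$
again linear fractional.

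\textbf{Main obstacle.} The hardest point is to choose the domain $\Omega\supset[0,1)$ with $w\sigma(\bd)\subset\sigma(\bd)$, and to prove $\sup_{w\in[0,1)}\|C_{\varphi_w}\|<\infty$ together with holomorphy of $w\mapsto\langle C_{\varphi_w}f,g\rangle$. For the inclusion, $\sigma(\bd)$ is the image of the ball under a projective map, an ellipsoid-type convex region containing $0$. It is therefore star-shaped about $0$, giving $w\sigma(\bd)\subset\sigma(\bd)$ for $w\in[0,1]$, and by openness and compactness for $w$ in a thin neighbourhood $\Omega$ of $[0,1)$. For the uniform norm bound I would use the Cowen--MacCluer norm estimates for linear fractional composition operators on $\hdbd$ \cite{cmlfm}. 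Alternatively, I would use the Littlewood subordination-type bound $\|C_\psi\|\le\left(\frac{1+|\psi(0)|}{1-|\psi(0)|}\right)^{d/2}$ valid for linear fractional $\psi$, noting $\varphi_w(0)=0$. Holomorphy I would obtain as in the Fock case: the Taylor coefficients of $C_{\varphi_w}f$ depend holomorphically on $w$, and local boundedness of $\|C_{\varphi_w}\|$ for $w\in\Omega$, possibly after shrinking $\Omega$, allows applying Mattner's result \cite{Mattner}.
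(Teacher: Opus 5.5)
Your argument is correct. Apart from one step it runs parallel to the paper's: the same Koenigs map, (SK1)--(SK4) via Lemmas \ref{lem:suf1} and \ref{lem:suf2} after passing to an iterate, and $\Omega$ obtained from the strict starlikeness of the ellipsoid $\sigma(\bd)$.

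Two small corrections first. With $P=(I_d-A^*)^{-1}C$ one has $C+A^*P=P$, not $C-A^*P=P$. So the intertwining map is $\sigma(z)=z/(1+\langle z,P\rangle)$, which is your formula with $P$ replaced by $-P$. Since $|-P|=|P|<1$, your bound $\|\sigma^\alpha\|\le(1-|P|)^{-|\alpha|}$ and your formula for $\varphi_w$ survive this substitution. Second, you do not need the iterate to be loxodromic. The same $\sigma$ satisfies $\sigma\circ\varphi^{[n]}=A^n\sigma$, and every (SK) condition involves only $\sigma$, $\Omega$ and $A^n$. Your bookkeeping here is more careful than the paper's one-line reduction: the Jordan and independence conditions pass from $A$ to $A^n$, and necessity needs only the Koenigs property of $\varphi$ itself.

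The genuinely different step is the uniform bound on $\|C_{\varphi_w}\|$. The paper splits $\Omega$. For small $|w|$ it shows $\varphi_w(\bd)\subset\frac12\bd$. For the other $w$ it applies the MacCluer--Shapiro estimate for univalent self-maps fixing $0$, fed by a lower bound on the denominator in the explicit formula for $\varphi_w^{-1}$.

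You instead invoke the Littlewood-type estimate $\|C_\psi\|\le\bigl((1+|\psi(0)|)/(1-|\psi(0)|)\bigr)^{d/2}$ for linear fractional self-maps $\psi$ of $\bd$. This is a theorem of M.~Jury (J.\ Funct.\ Anal., 2008), based on the fact that such maps lie in the Schur--Agler class. It is not in the paper, and it fails for general holomorphic self-maps when $d\ge 2$, so it needs an explicit citation rather than an appeal to subordination.

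Granting it, each $\varphi_w$ with $w\in\Omega$ is a linear fractional self-map of $\bd$ fixing $0$, so $\|C_{\varphi_w}\|\le 1$ on all of $\Omega$ at once. This is sharper than the paper's bound and also gives the local boundedness needed for Mattner's theorem without shrinking $\Omega$. The paper's route costs a few explicit estimates but relies only on the MacCluer--Shapiro bound. Your first option, unspecified ``Cowen--MacCluer norm estimates'', would not carry this step by itself.

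Finally, your holomorphy argument, via Taylor coefficients and the orthogonality of monomials in $\hdbd$, is an equally valid substitute for the paper's boundary-integral formula.
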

\begin{proof}
It suffices to prove that $(\hdbd,\varphi)$ has the strong Koenigs property. As in the proof of Theorem \ref{thm:fock}, we may and shall assume that $\|A\|<1.$ As shown in \cite{cmlfm}, the linear fractional map
$$\sigma(z)=\frac{z}{\langle z,P\rangle+1}$$
is defined on $\bd$ and verifies $\sigma\circ\varphi=A\sigma.$ It is also easy to check that $\sigma$ is injective, and
$$\sigma^{-1}(z)= \frac{z}{\langle z,-P\rangle+1}.$$
As $\sigma$ extends continuously on $\overline{\bd},$
$\sigma$ is bounded on $\bd$ and therefore $\|\sigma^\alpha\|\leq CM^{|\alpha|}$ for all $\alpha\in\NNZ^d.$ The same inequality is satisfied for $z^\alpha$ and since $\|A\|<1,$ we may apply Lemma \ref{lem:suf1} and Lemma \ref{lem:suf2} and it remains to prove (SK5). Since $\sigma$ extends to $\overline{\bd}$ and is injective, \cite[Theorem 6]{cmlfm} tells us that $\sigma(\bd)$ is an ellipsoid containing $0.$ In particular, $\sigma(\bd)$ is strictly starlike with respect to the origin, meaning that $r\overline{\sigma(\bd)}\subset\sigma(\bd)$ for all $r\in(0,1).$ As in \cite[Lemma 6.4]{LLSR18}, this implies that if we set
$$\Omega=\{w\in\CC:\ w\overline{\sigma(\bd)}\subset\sigma(\bd)\},$$
then $\Omega$ is a domain that contains $[0,1).$ Moreover, for all $w\in\Omega\backslash\{0\}$ and all $z\in\bd,$
\begin{align*}
\varphi_w(z)&=\frac{wz}{\langle z,(1-\bar w)P\rangle+1},\\
\varphi_w^{-1}(z)&=\frac{z}{\langle z,(\bar w-1)P\rangle+w}.
\end{align*}
We show that the norm of $C_{\varphi_w}$ is controlled if $|w|$ is small enough. Let $\eta>0$ be such that $(1+\eta)\|P\|<1$ and let $\delta=1-(1+\eta)\|P\|.$ Then provided $|w|\leq\min(\eta,\delta/2),$ for any $z\in\bd,$
$$|\varphi_w(z)|\leq \frac{\delta/2}{1-(1+\eta)\|P\|}\leq \frac 12.$$
This shows that
$$\sup_{\begin{subarray}c w\in \Omega \\
|w|\leq\min(\eta,\delta/2)\end{subarray}} \|C_{\varphi_w}\|<\infty.$$
To control the norm of $\|C_{\varphi_w}\|$ for the other values of $w,$ we shall use the following result of \cite{MclSha86}: let $\psi:\bd\to\bd$ be holomorphic, univalent, and satisfying $\psi(0)=0.$ Then $C_\psi$ is bounded on $\hdbd$ and
$$\|C_\psi\|\leq C\left(\sup_{z\in\psi(\bd)} |(\psi^{-1})'(z)|\right)^{2(d-1)}.$$

To prove that $\sup_{w\in\Omega,\ |w|\geq\min(\eta,\delta/2)}\|C_{\varphi_w}\|<\infty,$ in view of the formula of $\varphi_w^{-1},$ one just need to prove that
$$|\langle z,(\bar w-1)P\rangle+w|\geq\veps$$
for some $\veps>0$  independent of $z\in\varphi_w(\bd)$ and  $w\in\Omega$ with $|w|\geq\min(\eta,\delta/2)$.
Let $w\in\Omega$ with $|w|\geq\min(\eta,\delta/2)$ and let $z=\varphi_w(z_0)\in\varphi_w(\bd).$
We have
\begin{align*}
|\langle \varphi_w(z_0),(\bar w-1)P\rangle+w|=\left|\frac{w}{\langle z_0,(1-\bar w)P\rangle+1}\right|
\geq \frac{\min(\eta,\delta/2)}{2}.
\end{align*}

Finally, let $f,g\in \hdbd$ and observe that, for all $w\in\Omega,$
$$\langle C_{\varphi_w}(f),g\rangle = \int_{\mathbb S_d}f(\sigma^{-1}w\sigma(\xi))\overline{g(\xi)}d\sigma(\xi).$$
Since $w\mapsto f(\sigma^{-1}w\sigma(\xi))\overline{g(\xi)}$ is analytic on $\Omega$ for almost all $\xi\in\mathbb S_d$ and since
$$\int_{\mathbb S_d}|f(\sigma^{-1}w\sigma(\xi))\overline{g(\xi)}|d\sigma(\xi)\leq \|C_{\varphi_w}(f)\|\cdot\|g\|$$
is locally bounded on $\Omega,$ Mattner's theorem \cite{Mattner} implies \eqref{eq:sks3}.
\end{proof}

\subsection{The polydisc}

The study of composition operators on the unit polydisc $\DD^d$ is a delicate subject and it is hard to characterize when a holomorphic self-map $\varphi$ of $\DD^d$ induces a bounded composition operator on $H^2(\DD^d)$. Therefore, we shall restrict ourselves to the affine symbols $\varphi(z)=Az+b$. It is described in \cite{BAYPOLY} how to decide if $C_\varphi$ is continuous or not on $H^2(\DD^d)$ in that case.

\begin{theorem}\label{thm:polydisc}
Let $\varphi(z)=Az+b$ be an affine self-map of $\DD^d$ and assume that $C_\varphi$ induces a bounded composition operator on $H^2(\DD^d).$ Assume moreover that $\rho(A)<1.$ Then $C_\varphi$ has the minimal commutant property if and only if
\begin{itemize}
  \item[$\bullet$] The canonical Jordan form of $A$ admits at most one Jordan block whose size exceeds $1$, and the size of this Jordan block, if it exists,  is exactly $2$;
  \item[$\bullet$] If $\lambda_1,\dots,\lambda_{\hat d}$ are the eigenvalues of $A$, repeated by geometric multiplicity, then $\lambda_1\dots,\lambda_{\hat d}$ are independent.
\end{itemize}
\end{theorem}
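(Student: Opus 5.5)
Necessity is Theorem \ref{nece}, once we know that $(H^2(\DD^d),\varphi)$ is a Koenigs system. Sufficiency is Theorem \ref{thm:sufficiency}, once we know it is a strong Koenigs system. So the whole proof reduces to checking (SK1)--(SK5) for the affine symbol, after a normalization.

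\textbf{Normalization and the Koenigs map.} Since $\rho(A)<1$, some power $\varphi^{[k]}(z)=A^kz+b_k$ has $\|A^k\|<1$ in an adapted sense. I would argue as in the proof of Theorem \ref{thm:fock}: by Lemma \ref{lem:powercommutant}, for sufficiency it suffices to treat an iterate. For necessity, Theorem \ref{nece} applies directly to $\varphi$ (only the Koenigs system property is used), so no reduction is needed there. Because $I-A$ is invertible, $\varphi$ has a unique fixed point $\xi=(I-A)^{-1}b$. Its iterates converge to $\xi$, and $\xi\in\overline{\DD^d}$.

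I expect the main obstacle to be that $\xi$ might lie on the boundary. For an affine self-map of $\DD^d$ with $\rho(A)<1$, however, the orbit of $0$ stays in a compact subset: $\varphi^{[n]}(0)\to\xi$ and $\varphi(\overline{\DD^d})\subset\overline{\DD^d}$. I would show $\xi\in\DD^d$ using boundedness of $C_\varphi$, via the characterization in \cite{BAYPOLY}, or via the fact that a fixed point on $\partial\DD^d$ forces a unimodular eigenvalue. Then set $\sigma(z)=z-\xi$. This is univalent with $0\in\sigma(\DD^d)$ and $\sigma\circ\varphi=A\sigma$, and $\sigma^\alpha\in H^2(\DD^d)$ because these are polynomials. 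Hence we have a Koenigs system, and necessity follows from Theorem \ref{nece}.

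\textbf{Checking (SK1)--(SK4).} (SK1) holds because $\|k_z\|^2=\prod_j(1-|z_j|^2)^{-1}$. We have $\|z^\alpha\|=1$, and $|\sigma|$ is bounded on $\DD^d$, so $\|\sigma^\alpha\|\le\|\sigma^\alpha\|_\infty\le (1+|\xi|_\infty)^{|\alpha|}$. Lemma \ref{lem:suf1} then gives (SK2) and (SK3). For (SK4), Lemma \ref{lem:suf2} requires $C_{\lambda A'}$ to be bounded for some $\lambda>1$, where $A'$ is the linear part of the iterate. I would choose $k$ with $\|A^k\|_{\infty\to\infty}<1/2$. Then $\lambda A^k$ maps $\DD^d$ into $r\DD^d$ for some $r<1$ and suitable $\lambda>1$, and composition with a map whose image is relatively compact in $\DD^d$ is bounded on $H^2(\DD^d)$, by the sup-norm/Cauchy estimate on the Taylor coefficients.

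\textbf{Checking (SK5).} Here $\varphi_w(z)=wz+(1-w)\xi$, and I take $\Omega=\DD$. Since $\DD^d$ is convex and $\xi\in\DD^d$, $\varphi_w$ maps $\DD^d$ into itself for $w\in[0,1)$, and for $w$ in a neighborhood of $[0,1)$ inside $\DD$. Boundedness uniformly in $w\in[0,1)$ is the delicate part. I would reduce by an automorphism $U$ of $\DD^d$ sending $0$ to $\xi$; this is a product of disc automorphisms, and $C_U$ is invertible. The map $U^{-1}\circ\varphi_w\circ U$ fixes $0$ and acts coordinatewise as the one-variable maps $u\mapsto \tau^{-1}(w\tau(u)+(1-w)\xi_j)$. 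Since $\varphi_w$ is coordinatewise diagonal, it is a product of one-variable maps $z_j\mapsto wz_j+(1-w)\xi_j$. So $C_{\varphi_w}$ is the tensor product of the one-variable operators, and
\[
\|C_{\varphi_w}\|_{H^2(\DD^d)}=\prod_{j=1}^d\|C_{\psi_{w,j}}\|_{H^2(\DD)}.
\]
Each factor is bounded by $\big((1+|\psi_{w,j}(0)|)/(1-|\psi_{w,j}(0)|)\big)^{1/2}$, and $|\psi_{w,j}(0)|\le|\xi_j|<1$. This gives a uniform bound. Analyticity of $w\mapsto\langle C_{\varphi_w}f,g\rangle$ follows as in the proof of Theorem \ref{thm:ball}, from the boundary integral and Mattner's theorem \cite{Mattner}, using local boundedness of $\|C_{\varphi_w}\|$ on $\DD$. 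That local boundedness follows from the same product estimate.
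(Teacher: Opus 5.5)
There is a genuine gap, and it is the step you flagged as the main obstacle: the claim that the fixed point $\xi=(I-A)^{-1}b$ lies in $\DD^d$. Neither justification you offer works, because the claim is false. Take $d=1$ and $\varphi(z)=(z+1)/2$. Then $C_\varphi$ is bounded on $H^2(\DD)$, as for every self-map of the disc, and $A=1/2$, yet $\xi=1\in\partial\DD$. So a boundary fixed point does not force a unimodular eigenvalue, and boundedness of $C_\varphi$ gives no information. On $\DD^2$, $\varphi(z)=((z_1+1)/2,z_2/3)$ behaves the same way and has independent eigenvalues $1/2,1/3$. In this situation $0\notin\sigma(\DD^d)$, so you do not even have a Koenigs system.

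The step also cannot be repaired, because the conclusion itself fails. Let $\theta=2\pi/\ln 2$. The bounded function $f(z)=(1-z)^{i\theta}$ satisfies $f\circ\varphi=2^{-i\theta}f=f$, so $1$ and $f$ are linearly independent vectors in $\ker(C_\varphi-I)$. The map $\psi(z)=1+(z-1)/\sqrt2$ is a self-map of $\DD$ commuting with $\varphi$, so $C_\psi\in\{C_\varphi\}'$, with $C_\psi 1=1$ and $C_\psi f=2^{-i\theta/2}f=-f$. However, every $S\in\overline{\alg(C_\varphi)}^{\sigma}$ acts as a scalar on each eigenspace of $C_\varphi$. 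Indeed, if $p_\iota(C_\varphi)\to S$ weakly and $C_\varphi x=\mu x$ with $x\neq 0$, then $p_\iota(\mu)$ converges to some $c$ independent of $x$, and $Sx=cx$. Hence $C_\psi\notin\overline{\alg(C_\varphi)}^{\sigma}$, and $C_\varphi$ has no minimal commutant even though both bullet conditions hold. This is consistent with \cite{LLSR18}, since $\varphi$ is a hyperbolic non-automorphism, not a loxodromic map.

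So the statement needs the extra hypothesis that the fixed point of $\varphi$ lies in $\DD^d$. The paper's proof has the same hole: it simply asserts ``$\xi\in\DD^d$''. Under that hypothesis your argument is correct and follows the paper's route: the strong Koenigs property via Lemmas \ref{lem:suf1} and \ref{lem:suf2}, plus a direct check of (SK5). It is in fact more careful than the printed proof in three places. First, you have $\varphi_w(z)=wz+(1-w)\xi$ rather than $wz$. Second, you read the reduction $\|A^k\|<1$ in the $\ell^\infty$ operator norm, which is what makes $\lambda A^k$ map $\DD^d$ into a compact subset so that $C_{\lambda A^k}$ is bounded. Third, your tensor-product estimate actually proves the uniform bound on $[0,1)$. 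One correction remains: you cannot take $\Omega=\DD$ when $\xi\neq0$, since $\varphi_w$ is then not a self-map of $\DD^d$ for $w$ near $-1$. Take instead $\Omega=\{w:\ |w|+|1-w|\max_j|\xi_j|<1\}$. This is a convex domain containing $[0,1)$, and on it $\varphi_w(\overline{\DD^d})\subset\DD^d$.
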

\begin{proof}
It suffices to prove that $(H^2(\DD^d),\varphi)$ has the strong Koenigs property. As above, we may and shall assume that $\|A\|<1.$ The Koenigs map is given by $\sigma(z)=z-\xi$ where $\xi\in\DD^d$ is the fixed point of $A.$ We deduce easily that, for all $\alpha\in\ZZ_+^d,$ $$\|\sigma^\alpha\|_{H^2(\DD^d)}\leq 2^{|\alpha|}\textrm{ and }\|z^\alpha\|=1.$$
Since $\|A\|<1$, there exists $\lambda>1$ such that $\|\lambda A\|<1$ so that $C_{\lambda A}$ is a bounded operator on $H^2(\DD^d)$. Finally, (SK5) is also clear since we can take $\Omega=\DD$ and for $w\in\Omega,$ $\varphi_w(z)=wz.$
\end{proof}

\subsection{One variable result}

Our theorem allows us to recover \cite[Theorem 6.1]{LLSR18} easily. Recall that on the unit disc, a Koenigs function always exists.

\begin{theorem}
Let $\varphi$ be a univalent, holomorphic self-map of the unit disk with $\varphi(0) =0.$ Let $\sigma$ be the Koenigs function of $\varphi$ and suppose that its range $\sigma(\DD)$ is bounded and strictly starlike with respect to the origin. Then the operator $C_\varphi$ acting on $H^2(\DD)$ has a minimal commutant.
\end{theorem}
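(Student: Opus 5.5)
We will deduce the statement from Theorem \ref{thm:sufficiency} with $d=1$, $\mathcal U=\DD$, $H=H^2(\DD)$. The task is to check that $(H^2(\DD),\varphi)$ is a strong Koenigs system and that the spectral hypotheses hold. Set $\lambda=\varphi'(0)$. Since $\varphi$ is univalent, not the identity, and a self-map fixing $0$, we have $\lambda\neq0$. If $\varphi$ is an automorphism, then $\varphi$ is a rotation. Its Koenigs range is then the whole disc, up to a dilation, which is not strictly starlike in the required sense; more precisely, for $|\lambda|=1$ the intended hypothesis excludes this case. I will therefore assume $0<|\lambda|<1$, which is the setting where the Koenigs function $\sigma$ exists. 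In dimension one the Jordan condition is automatic. Independence of the single eigenvalue $\lambda$ means $\lambda^m=\lambda^n\Rightarrow m=n$, which holds because $0<|\lambda|<1$. The spectral radius condition $|\lambda|<1$ also holds.

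Next come the Koenigs-system axioms. $C_\varphi$ is bounded by Littlewood's subordination principle. The Koenigs function $\sigma$ is univalent because $\varphi$ is univalent. It satisfies $\sigma\circ\varphi=\lambda\sigma$ and $\sigma(0)=0$. Since $\sigma(\DD)$ is bounded, every power $\sigma^n$ lies in $H^\infty\subset H^2$, with $\|\sigma^n\|\le \|\sigma\|_\infty^n$. Together with $\|z^n\|=1$, this gives \eqref{eq:sks1}. (SK1) is the standard estimate $\|k_z\|=(1-|z|^2)^{-1/2}$. Lemma \ref{lem:suf1} then yields (SK2) and (SK3). For (SK4), we have $|\lambda|<1$, so we pick $\mu>1$ with $|\mu\lambda|<1$; then $C_{\mu\lambda}$ is bounded on $H^2(\DD)$, and Lemma \ref{lem:suf2} applies.

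The main point is (SK5). Following \cite[Lemma 6.4]{LLSR18}, as in the proof of Theorem \ref{thm:ball}, we set
$\Omega=\{w\in\CC:\ w\overline{\sigma(\DD)}\subset\sigma(\DD)\}$. Strict starlikeness gives $[0,1)\subset\Omega$, and the cited lemma gives that $\Omega$ is a domain (open, and connected through $[0,1)$ using starlikeness). For $w\in\Omega$, the map $\varphi_w=\sigma^{-1}(w\sigma)$ is a univalent self-map of $\DD$ fixing $0$. Littlewood's principle in one variable gives $\|C_{\varphi_w}\|\le1$ uniformly, so \eqref{eq:sks2} holds; this is easier than the ball case, where one needed \cite{MclSha86}. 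For \eqref{eq:sks3}, I would write $\langle C_{\varphi_w}f,g\rangle=\int_{\mathbb T}f(\sigma^{-1}(w\sigma(\xi)))\overline{g(\xi)}\,dm(\xi)$. This requires boundary values of $\varphi_w$. Since $w\overline{\sigma(\DD)}\subset\sigma(\DD)$, the closure of $\varphi_w(\DD)$ is compactly contained in $\DD$, so $\varphi_w$ extends continuously to $\overline{\DD}$ with values in a compact subset of $\DD$. Alternatively, one can pair against the reproducing kernels and use that $w\mapsto f(\varphi_w(z))$ is holomorphic. The integrand is holomorphic in $w$, and the integral is locally bounded by $\|f\|\|g\|$. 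Mattner's theorem \cite{Mattner} then gives the holomorphy. I expect the only delicate point to be verifying that $\Omega$ is open and connected and that the boundary-value integral formula is legitimate. Both follow from the compact containment just noted, which gives local uniformity in $w$.

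With all of (SK1)–(SK5) verified, Theorem \ref{thm:sufficiency} shows that $C_\varphi$ has a minimal commutant.
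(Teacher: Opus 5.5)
Your proposal is correct and follows the paper's route: check that $(H^2(\DD),\varphi)$ is a strong Koenigs system (via Lemmas \ref{lem:suf1} and \ref{lem:suf2}, and the set $\Omega$ as in the ball case), then apply Theorem \ref{thm:sufficiency}, filling in the details the paper dismisses as easy. One small slip: $\DD$ itself is strictly starlike, so rotations are excluded not by the starlikeness hypothesis but because the Koenigs function is only defined when $0<|\varphi'(0)|<1$ (for periodic rotations the conclusion would in fact fail), and that is the justification you end up using anyway.
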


\begin{proof}
Again, one proves that $(H^2(\DD),\varphi)$ is a strong Koenigs system.
In fact, all the hypotheses ‌imply‌ this easily. Observe that in this particular case, $A=\varphi'(0)$ satisfies $|A|<1.$
\end{proof}

\begin{remark}
Our proof shows that we can replace the assumption $\sigma(\DD)$ is bounded by {$\sigma^n\in H^2(\DD)$ for all $n\geq 0.$}
This is interesting because the latter assumption is satisfied as soon as $C_\varphi$ is compact (see \cite[p. 94]{sh}).
\end{remark}

\color{black}
\medskip
\noindent \textbf{Acknowledgment.}
M. Wang and X. Yao were supported by National Science Foundation of China
(No. 12571145).


\begin{thebibliography}{99}

\bibitem{BAYPOLY} F. Bayart,
\newblock {\em Composition operators on the polydisk induced by affine maps},
J. Funct. Anal., \textbf{260} (2011), 1969--2003.

\bibitem{BT} F. Bayart and S. Tapia-Garc\'{\i}a,
\newblock{\em Cyclicity of composition operators on the Fock space},
J. Operator Theory, \textbf{92} (2024), no. 2, 549--577.

\bibitem{BY} F. Bayart and X. Yao,
\newblock {\em K\"{o}nigs maps and commutants of composition operators on the Hardy-Hilbert space of Dirichlet series,} arXiv: 2406.19737, 2024.

\bibitem{CMS} B. Carswell, B. MacCluer and A. Schuster,
\newblock{\em Composition operators on the Fock space},
Acta Sci. Math. (Szeged), \textbf{69} (2003) 871--887.

\bibitem{cl98} B. Cload,
\newblock{\em Generating the commutation of a composition operator},
\newblock Contemp. Math., \textbf{213} (1998), 11--15.

\bibitem{cm}
C. Cowen and B. MacCluer,
\newblock {\em Composition operators on spaces of analytic functions,}
\newblock CRC Press, Boca Raton, 1995.

\bibitem{cmlfm}
C. Cowen and B. MacCluer,
\newblock {\em Linear fractional maps of the ball and their composition operators},
\newblock Acta. Sci. Math. (Szeged), \textbf{66} (2000), 351--376.

\bibitem{GLR} I. Gohberg, P. Lancaster and L. Rodman,
\newblock{\em Invariant subspaces of matrices with applications,}
SIAM, 1986.

\bibitem{GL25} M. González and F. León-Saavedra,
\newblock {\em Minimal commutant and double commutant property for analytic Toeplitz operators,}
\newblock Banach J. Math. Anal. {\bf 19} (2025), no. 3, Paper No. 37, 22 pp.

\bibitem{HJ91} R. Horn and C. Johnson,
\newblock{\em Topics in matrix analysis,}
Cambridge University Press, 1991.


\bibitem{LLSR18}
M. Lacruz, F. Le\'{o}n-Saavedra, S. Petrovic and L. Rodr\'{\i}guez-Piazza,
\newblock{\em Composition operators with a minimal commutant},
\newblock Adv. Math., \textbf{328} (2018) 890--927.

\bibitem{LQ26}
P. Lefèvre and H. Queffélec,
\newblock{\em A Primer, and Beyond, on Composition Operators on the Unit Disk},
Cambridge University Press, 2026.

\bibitem{MclSha86}
B. MacCluer and J. Shapiro,
\newblock {\em Angular derivatives and compact composition operators on the Hardy and Bergman spaces,}
\newblock Can. J. Math., \textbf{38} (1986) 878--906.

\bibitem{Mattner}
M. Mattner,
\newblock {\em Complex differentiation under the integral},
\newblock Nieuw. Arch. Wiskd., \textbf{2} (2001) 32--35.

\bibitem{Sa67} D. Sarason,
\newblock {\em Generalized interpolation in $H^\infty$,}
\newblock Trans. Amer. Math. Soc., \textbf{127} (1967) 179--203.

\bibitem{sh} J. Shapiro,
\newblock{\em Composition operators and classical function theory,}
\newblock Springer, New York, 1993.

\bibitem{SW} A. Shields and L. Wallen,
\newblock{\em The commutant of certain Hilbert spaces operators,}
\newblock Indiana Univ. Math. J., \textbf{20} (1971) 777--788.

\bibitem{Wo02} T. Worner,
\newblock{\em Commutants of certain composition operators,}
\newblock Acta. Sci. Math. (Szeged), \textbf{68} (2002), 413-432.

\bibitem{Zh} K. Zhu,
\newblock{\em Analysis on Fock spaces,}
Springer-Verlag, New York, 2012.

\end{thebibliography}
\end{document}